 \newtheorem{theorem}{Theorem}[section]
 \newtheorem{thm}[theorem]{Theorem}
 \newtheorem{lem}[theorem]{Lemma}
 \newtheorem{cor}[theorem]{Corollary}
 \newtheorem{prop}[theorem]{Proposition}
 \DeclareMathOperator\alt{Alt}
 \DeclareMathOperator\sym{Sym}
\DeclareMathOperator\PGL{PGL}
\DeclareMathOperator\hl{hl}
\begin{document}
\title{A new proof for the Erd\H{o}s-Ko-Rado Theorem for the alternating group}

 \author{Bahman Ahmadi}
 \ead{ahmadi2b@uregina.ca}

 \author{Karen Meagher \corref{cor1}\fnref{fn1}}
 \ead{karen.meagher@uregina.ca}
 \cortext[cor1]{Corresponding author}
 \fntext[fn1]{Research supported by NSERC.}

 \address{Department of Mathematics and Statistics,\\
 University of Regina, 3737 Wascana Parkway, S4S 0A4 Regina SK, Canada}

 \begin{abstract}
   A subset $S$ of the alternating group on $n$ points is {\it
     intersecting} if for any pair of permutations $\pi,\sigma$ in
   $S$, there is an element $i\in \{1,\dots,n\}$ such that
   $\pi(i)=\sigma(i)$. We prove that if $S$ is intersecting, then
   $|S|\leq \frac{(n-1)!}{2}$. Also, we prove that if $n \geq 5$, then
   the only sets $S$ that meet this bound are the cosets of the
   stabilizer of a point of $\{1,\dots,n\}$.
 \end{abstract}

 \begin{keyword}
 derangement graph, independent sets, alternating group, Erd\H{o}s-Ko-Rado theorem
 \end{keyword}

\maketitle

\section{Introduction}
The famous Erd\H{o}s-Ko-Rado theorem~\cite{MR0140419} (abbreviated EKR
Theorem) gives bounds for the sizes of intersecting set systems and
characterizes the systems that achieve the bound. Many similar theorems
have been proved for other mathematical objects with a relevant
concept of ``intersection''.  See 
\cite{ MR2009400, MR1722210,MR0382015,MR2156694,MR657053} for versions
of this result for permutations, integer sequences, vector spaces, set
partitions and blocks in a $t$-design.

Let $G\leq \sym(n)$ be a permutation group with the natural action on
the set $\{1,\ldots, n\}$. Two permutations $\pi,\sigma\in G$ are said
to {\it intersect} if $\pi\sigma^{-1}$ has a fixed point. A subset
$S\subseteq G$ is, then, called {\it intersecting} if any pair of its
elements intersect. Clearly, the stabilizer of a point is an
intersecting set in $G$ (as is any coset of the stabilizer of a
point).  We say the group $G$ has the {\it EKR property}, if the size
of any intersecting subset of $G$ is bounded above by the size of the
largest point-stabilizer in $G$. Further, $G$ is said to have the {\it
  strict EKR property} if the only maximum intersecting subsets of $G$
are cosets of the stabilizer of a point. In \cite{MR2009400}, it was
proved that $\sym(n)$ has the strict EKR property. This result caught
the attention of several researchers, indeed, the result was proved
with vastly different methods in \cite{Meagher_Godsil,MR2061391} and
\cite{MR2419214}. Further, researchers have also worked on finding
other subgroups of $\sym(n)$ that have the strict EKR property. For
example in~\cite{KuW07} it is shown that $\alt(n)$ has the strict EKR
property, provided that $n\geq 5$.

In \cite{Meagher_Godsil}, the authors prove that the symmetric group
has the strict EKR property using an algebraic method which relies
strongly on the character theory of the symmetric group. This method
establishes an interesting connection between the algebraic properties
of the group and properties of a graph (based on the group) which can
be used to determine if a version of the EKR theorem holds. A more
significant characteristic of this approach is that it introduces a
standard way to determine if a permutation group has the strict EKR
property.  For instance, using this method, in \cite{MeagherS11} it is
proved that the projective general linear group $\PGL(2,q)$ acting on
the points of the projective line has the strict EKR property.  This
motivated the authors of the present paper to provide an alternative
prove of the strict EKR property for the alternating group using the
method of \cite{Meagher_Godsil}. In other words, we prove the
following.

\begin{thm}\label{main_Alt}
For $n\geq 5$, any intersecting subset of $\alt(n)$ has size at most
\[
\frac{(n-1)!}{2}.
\]
An intersecting subset of $\alt(n)$ achieves this bound if and only if
it is a coset of point-stabilizer.
\end{thm}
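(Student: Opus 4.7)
The approach follows the algebraic framework of \cite{Meagher_Godsil}. Let $\Gamma_n$ denote the \emph{derangement graph} of $\alt(n)$: its vertex set is $\alt(n)$ and two elements $\pi,\sigma$ are adjacent iff $\pi\sigma^{-1}$ has no fixed point. Intersecting subsets of $\alt(n)$ correspond exactly to cocliques in $\Gamma_n$, so Theorem~\ref{main_Alt} reduces to the two statements $\alpha(\Gamma_n)=(n-1)!/2$ and that the maximum cocliques are precisely the cosets of point-stabilizers. The connection set, namely the set $D$ of even derangements, is a union of $\alt(n)$-conjugacy classes, so $\Gamma_n$ is a normal Cayley graph on $\alt(n)$ and its eigenvalues are indexed by the irreducible characters $\chi$ of $\alt(n)$ via
\[
\lambda_\chi \;=\; \frac{1}{\chi(1)}\sum_{g\in D}\chi(g),
\]
with the $\lambda_\chi$-eigenspace equal to the $\chi$-isotypic component of the regular $\alt(n)$-module.

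\medskip

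The first main step is the Hoffman--Delsarte ratio bound $\alpha(\Gamma_n)\le |\alt(n)|(-\tau)/(d-\tau)$, where $d=|D|$ is the valency and $\tau$ is the smallest eigenvalue. The natural candidate for $\tau$ is the eigenvalue attached to the restriction to $\alt(n)$ of the standard $\sym(n)$-character associated with the partition $(n-1,1)$; on any derangement $g$ this character takes the value $\mathrm{fix}(g)-1=-1$, so the eigenvalue equals $-d/(n-1)$. Substituting into the ratio bound gives exactly $|\alt(n)|/n=(n-1)!/2$, so the upper bound follows as soon as I verify that no other irreducible $\alt(n)$-character produces a smaller eigenvalue. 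This is the step I expect to be the main obstacle: one must bound $\lambda_\chi$ for every remaining $\chi$, and in particular handle the irreducible characters of $\alt(n)$ that arise from the splitting of $\sym(n)$-characters associated with self-conjugate partitions, which have no direct analogue in the $\sym(n)$ treatment. For characters that restrict from $\sym(n)$ I would use the Murnaghan--Nakayama rule in the spirit of \cite{Meagher_Godsil}; for the split characters I would bound their values on even-derangement classes directly. The borderline partitions such as $(n-2,2)$, $(n-2,1,1)$, and near-diagonal self-conjugate shapes will require the most care.

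\medskip

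For the characterization of extremal cocliques, the equality case of the ratio bound forces the characteristic vector $v_S$ of any maximum coclique $S$ to lie in $\langle\mathbf{1}\rangle\oplus U_\tau$, where $U_\tau$ is the sum of $\chi$-isotypic components over the $\chi$ attaining $\tau$. Assuming the previous step has established that only the standard character achieves $\tau$, the space $\langle\mathbf{1}\rangle\oplus U_\tau$ has dimension $1+(n-1)^2$ and coincides with the span in $\mathbb{C}[\alt(n)]$ of the characteristic functions of the ``coordinate cosets'' $\{\sigma\in\alt(n):\sigma(i)=j\}$ for $i,j\in\{1,\dots,n\}$. I would then finish by a purely combinatorial argument showing that the only $0/1$-vectors of weight $(n-1)!/2$ in this span are exactly the characteristic functions of these $n^2$ coordinate cosets, which are precisely the cosets of the point-stabilizers of $\alt(n)$. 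The hypothesis $n\ge 5$ is needed both to guarantee that $(n-1,1)$ is the sole contributor to $\tau$ and to force this rigidity in the $0/1$-solutions; for $n=5$ a direct hand verification using the small character table of $\alt(5)$ may be the cleanest route.
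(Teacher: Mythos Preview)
Your proposal takes a genuinely different route from the paper. You work with the Hoffman ratio bound and aim to show directly that the eigenvalue $-d/(n-1)$ attached to the standard character is the least eigenvalue of $\Gamma_n$, and is attained only there. The paper instead uses the clique--coclique bound (Theorem~\ref{clique_coclique_baound}) together with explicit cliques of size $n$ in $\Gamma_n$ built from Hamiltonian-type decompositions of the complete digraph $K_n^\ast$ (Lemmas~\ref{n_odd} and~\ref{n_even}). The payoff of the paper's route is that it sidesteps the eigenvalue analysis entirely: instead of bounding $\lambda_\chi=\chi(D)/\chi(1)$ for every irreducible $\chi$ of $\alt(n)$, one only has to check that $\chi(C)\neq 0$ for a single clique $C$, which reduces to evaluating $\chi^\lambda$ on \emph{one} conjugacy class ($n$-cycles when $n$ is odd, products of two $n/2$-cycles when $n$ is even) and comparing with the degree. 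Your approach would require summing $\chi$ over \emph{all} even derangement classes, and the step you flag as the main obstacle---ruling out smaller eigenvalues, especially for the split characters coming from self-conjugate partitions---is a real difficulty that the paper's method simply does not face. Both routes converge on the conclusion that $v_S-\tfrac{1}{n}\mathbf{1}$ lies in the standard module; after that the paper finishes via a concrete matrix-rank argument (Proposition~\ref{fullrank} and the block structure of $\overline{A}$), whereas your final ``purely combinatorial'' step is left unspecified and would presumably have to follow similar lines.
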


Throughout the paper we denote $G_n=\alt(n)$. The next section
provides a brief overview of the method used and explains how this
problem can be stated as a question about a graph. In
Section~\ref{rep_theory} a short background of the representation
theory of the symmetric group and the alternating group is
presented. Section~\ref{sec:twolayerhooks} proves some lower bounds on
the dimensions of some special representations of $G_n$ which will be
used in Section~\ref{standard_module}. In
Section~\ref{standard_module} we give more details about the standard
representation of the alternating group and its corresponding
module. In Section~\ref{main_proof} we give the proof of the main
theorem. We conclude with Section~\ref{conclusions} in which it is
proved that the strict EKR property for the $\sym(n)$ can be deduced
from the fact that $G_n$ has the strict EKR property.

\section{Overview of The Method}

This section is devoted to explaining the method for the proof of
Theorem~\ref{main_Alt}. To this goal, we start with recalling the
well-known {\it clique-coclique bound}; the version we use here was
originally proved by Delsarte \cite{MR0384310}. Assume $\mathcal{A}=\{
A_0, A_1,\ldots, A_d\}$ is an association scheme on $v$ vertices and
let $\{E_0, E_1,\ldots, E_d\}$ be the corresponding idempotents. (For a
detailed discussion about association schemes, the reader may refer to
\cite{MR2047311} or \cite{MR882540}.) For any subset $S$ of a set $X$,
we denote the characteristic vector of $S$ in $X$ by $v_S$.

\begin{thm}(Clique-Coclique Bound)\label{clique_coclique_baound} Let
  $X$ be the union of some of the graphs in an association scheme
  $\mathcal{A}$ on $v$ vertices. If $C$ is a clique and $S$ is an
  independent set in $X$, then
\[
|C||S|\leq v.
\]
If equality holds then
\[
v_C^T\,E_j\,v_C\,\,v_S^T\,E_j\,v_S = 0,\quad \text{for all} \quad j > 0.
\]
\end{thm}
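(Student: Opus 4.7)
The plan is to exploit the spectral decomposition of the characteristic vectors $v_C$ and $v_S$ in the basis of primitive idempotents of $\mathcal{A}$. Writing $v_C = \sum_{j=0}^{d} E_j v_C$ and $v_S = \sum_{j=0}^{d} E_j v_S$, the identity $E_0 = \frac{1}{v}J$ yields $v_C^T E_0 v_S = \frac{|C||S|}{v}$. On the combinatorial side, any two vertices lying in both $C$ and $S$ would have to be simultaneously adjacent and nonadjacent in $X$, so $|C \cap S| \leq 1$ and hence $v_C^T v_S \leq 1$. The heart of the argument is therefore to establish $\frac{|C||S|}{v} \leq v_C^T v_S$, which combined with the previous inequality gives the stated bound.

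My preferred route is the Hoffman ratio bound applied inside the Bose-Mesner algebra. Let $k$ be the valency of $X$ and let $\tau$ denote the smallest eigenvalue of $A_X$ on the eigenspaces other than $E_0$; assuming $\tau < 0$ (which must hold once $X$ contains an edge), the ratio bound gives $|S| \leq -v\tau/(k - \tau)$, while the dual statement on cliques, obtained by running the same argument on $J - I - A_X$, gives $|C| \leq (k - \tau)/(-\tau)$. Multiplying yields $|C||S| \leq v$. An alternative, self-contained route is to apply Cauchy-Schwarz componentwise, $|v_C^T E_j v_S|^2 \leq (v_C^T E_j v_C)(v_S^T E_j v_S)$, and couple this with the normalisations $\sum_j v_C^T E_j v_C = |C|$ and $\sum_j v_S^T E_j v_S = |S|$ in a short linear-programming calculation.

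For the equality statement, if $|C||S| = v$ every intermediate inequality is forced to be tight. In the Cauchy-Schwarz formulation this means $E_j v_C$ and $E_j v_S$ are parallel for each $j$; global tightness then forces one of them to vanish for every $j > 0$, which is exactly the condition $v_C^T E_j v_C \cdot v_S^T E_j v_S = 0$. The main obstacle I anticipate is the sign analysis that turns the componentwise bounds on $v_C^T E_j v_S$ into the needed inequality on $v_C^T v_S$; going through the Hoffman bound sidesteps this by packaging the positivity into the choice of a PSD element of the Bose-Mesner algebra, leaving only the symmetric clique calculation as the final check.
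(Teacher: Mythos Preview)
The paper does not prove this theorem itself; it cites Godsil--Meagher (2009) for the argument. So the relevant comparison is with that reference, and more importantly with whether your sketch actually goes through.

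Your preferred route has a genuine error. You assert that applying the ratio bound to $J-I-A_X$ yields $|C|\leq (k-\tau)/(-\tau)=1-k/\tau$. It does not: the Hoffman bound on the complement involves the \emph{second-largest} eigenvalue $\theta_1$ of $A_X$, giving $|C|\leq v(1+\theta_1)/(v-k+\theta_1)$, which has no reason to multiply with the coclique bound to~$v$. Worse, the inequality $|C|\leq 1-k/\tau$ is simply false for graphs in association schemes. Take two disjoint copies of $K_4$ joined by a perfect matching; the relations $I$, ``same block'', ``matched'', ``other block, not matched'' form an association scheme on $8$ vertices. The graph $X$ with edge set ``same block'' $\cup$ ``matched'' has $k=4$, eigenvalues $4,2,0,-2$, so $\tau=-2$ and your bound reads $|C|\leq 3$; yet each $K_4$ is a clique of size~$4$. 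So this route cannot be repaired by a more careful derivation.

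Your fallback route (Cauchy--Schwarz plus an unspecified LP step) never confronts the sign issue you yourself flag: you need $\sum_{j>0} v_C^T E_j v_S\geq 0$, and nothing in the proposal supplies it. The missing idea, which is what the cited proof actually uses, is to work with the \emph{inner distributions} rather than with a single eigenvalue. Writing $a_j=v_C^TE_jv_C$, $b_j=v_S^TE_jv_S$, and $c_i=v_C^TA_iv_C$, $s_i=v_S^TA_iv_S$, the clique/coclique hypothesis forces $c_is_i=0$ for every $i>0$. The orthogonality relations of the scheme then give the exact identity
\[
\sum_{j\geq 0}\frac{a_jb_j}{m_j}=\frac{|C|\,|S|}{v},
\]
and since the $j=0$ term alone equals $(|C||S|/v)^2$ while all summands are nonnegative, one gets $|C||S|/v\leq 1$ immediately, with equality forcing $a_jb_j=0$ for all $j>0$. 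That single identity replaces both of your proposed mechanisms.
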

We refer the reader to \cite{Meagher_Godsil} for a proof of
Theorem~\ref{clique_coclique_baound}. We will also make use of the
following straight-forward corollary of this result that was also
proved in \cite{Meagher_Godsil}.

\begin{cor}\label{clique_vs_coclique}
  Let $X$ be a union of graphs in an association scheme such
  that the clique-coclique bound holds with equality in $X$.  Assume
  that $C$ is a maximum clique and $S$ is a maximum independent set in
  $X$. Then, for $j > 0$, at most one of the vectors $E_jv_C$ and
  $E_jv_S$ is not zero.
\end{cor}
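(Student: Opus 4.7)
The plan is to derive the corollary directly from the equality case of Theorem~\ref{clique_coclique_baound}, using only the standard fact that the primitive idempotents $E_j$ of the Bose--Mesner algebra of a (symmetric) association scheme are real symmetric matrices satisfying $E_j^2 = E_j$.

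Given that assumption, I would first rewrite the quadratic form $v_C^T E_j v_C$. Since $E_j = E_j^T = E_j^2$, one has
\[
v_C^T E_j v_C \;=\; v_C^T E_j^T E_j v_C \;=\; \|E_j v_C\|^2,
\]
and an identical computation gives $v_S^T E_j v_S = \|E_j v_S\|^2$. In particular both quantities are nonnegative, and each vanishes if and only if the corresponding vector $E_j v_C$ or $E_j v_S$ is zero.

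Now I would apply the equality clause of Theorem~\ref{clique_coclique_baound}. Because $C$ is a maximum clique and $S$ is a maximum independent set in $X$, and because by hypothesis the clique-coclique bound is tight, we have $|C||S| = v$, hence
\[
v_C^T E_j v_C \cdot v_S^T E_j v_S \;=\; 0 \qquad \text{for every } j > 0.
\]
Substituting the norm expressions, this becomes $\|E_j v_C\|^2 \, \|E_j v_S\|^2 = 0$, so for every $j > 0$ at least one of $E_j v_C$ and $E_j v_S$ is the zero vector. Equivalently, at most one of them is nonzero, as claimed.

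There is essentially no serious obstacle here: the content of the corollary is an algebraic repackaging of the equality condition in Theorem~\ref{clique_coclique_baound}, and the only ingredient beyond that theorem is the symmetry and idempotency of the $E_j$. If anything needs care, it is simply flagging that the association scheme is taken to be the standard (symmetric, commutative) kind so that the $E_j$ really are orthogonal projections onto the common eigenspaces, which justifies the step $v^T E_j v = \|E_j v\|^2$.
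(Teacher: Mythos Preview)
Your argument is correct and is exactly the intended derivation: the paper does not spell out a proof of this corollary but refers to it as a ``straight-forward corollary'' of Theorem~\ref{clique_coclique_baound} proved in \cite{Meagher_Godsil}, and the standard justification is precisely the one you give, namely $v^T E_j v = \|E_j v\|^2$ via $E_j = E_j^T = E_j^2$.
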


Let $G$ be a group and $D$ a subset of $G$, which does not include
the identity element of $G$ and is closed under inversion. The {\it
  Cayley graph of $G$ with respect to $D$} is defined to be the graph
$\Gamma(G;D)$ with vertex set $G$ in which two vertices $g,h$ are
adjacent if and only if $gh^{-1}\in D$. If $\mathcal{D}_G$ is
the set of all fixed-point-free elements of $G$, then the graph
$\Gamma(G,\mathcal{D}_G)$ is called the {\it derangement graph} of $G$
and is denoted by $\Gamma_G$. 

If we view $G$ as a permutation group, two permutations in $G$ are
intersecting if and only if their corresponding vertices are not
adjacent in $\Gamma_G$. Therefore, the problem of classifying the
maximum intersecting subsets of $G$ is equivalent to characterizing
the maximum independent sets of vertices in $\Gamma_G$. Since
$\mathcal{D}_G$ is a union of conjugacy classes of $G$ (namely the
derangement conjugacy classes), $\Gamma_G$ is a union of graphs in the
conjugacy class scheme of $G$. Thus the clique-coclique bound can be
applied to $\Gamma_G$.

Furthermore, the idempotents of conjugacy class scheme are $E_\chi$,
where $\chi$ runs through the set of all irreducible characters of $G$; the
entries of $E_\chi$ are given by
\begin{equation}\label{idempotent}
(E_\chi)_{\pi,\sigma}=\frac{\chi(1)}{|G|}\chi(\pi^{-1}\sigma).
\end{equation}
(see~\cite{MR546860} or \cite[Sections 2.2 and 2.7]{MR882540} for a
proof of this). The vector space generated by the columns of $E_\chi$
is called the {\it module} corresponding to $\chi$ or simply the {\it
  $\chi$-module} of $\Gamma_G$. For any character $\chi$ of $G$ and
any subset $X$ of $G$ define
\[
\chi(X)=\sum_{x\in X}\chi(x).
\]
Using Corollary~\ref{clique_vs_coclique} and Equation
(\ref{idempotent}) one observes the following.
\begin{cor}\label{at_most_one_non-zero}
  Assume the clique-coclique bound holds with equality for the graph
  $\Gamma_G$ and let $\chi$ be an irreducible character of $G$ that is
  not the trivial character. If there is a clique $C$ of maximum size
  in $\Gamma_G$ with $\chi(C)\neq 0$, then
\[
E_\chi\,v_S = 0
\]
for any maximum independent set $S$ of $\Gamma_G$.
\end{cor}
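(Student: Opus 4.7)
The plan is to deduce the statement from Corollary~\ref{clique_vs_coclique} by verifying that the hypothesis $\chi(C) \neq 0$ forces $E_\chi v_C \neq 0$; since $\chi$ is a non-trivial irreducible character, $E_\chi$ is one of the idempotents with index $j > 0$ in the conjugacy-class scheme, so the conclusion $E_\chi v_S = 0$ will then fall out immediately.

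The only real computation is to extract $\chi(C)$ as a single entry of $E_\chi v_C$. I would evaluate the coordinate of $E_\chi v_C$ indexed by the identity $1 \in G$ using formula~(\ref{idempotent}):
\[
(E_\chi v_C)_1 \;=\; \sum_{\sigma \in C} (E_\chi)_{1,\sigma} \;=\; \frac{\chi(1)}{|G|} \sum_{\sigma \in C} \chi(1^{-1}\sigma) \;=\; \frac{\chi(1)}{|G|}\,\chi(C).
\]
Because $\chi(1)$ is the positive integer equal to the dimension of the irreducible representation affording $\chi$, and $\chi(C) \neq 0$ by hypothesis, this single entry is non-zero, and so $E_\chi v_C \neq 0$. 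Note that the argument does not require $1 \in C$; the formula above is correct for any subset $C$ of $G$.

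With $E_\chi v_C \neq 0$ in hand, the assumption that the clique-coclique bound is attained for $\Gamma_G$ is precisely what is needed to invoke Corollary~\ref{clique_vs_coclique}: for each $j > 0$ at most one of $E_j v_C$ and $E_j v_S$ is non-zero, and since $E_\chi v_C$ falls on the non-zero side, we must have $E_\chi v_S = 0$ for every maximum independent set $S$ of $\Gamma_G$. There is no serious obstacle in the proof; the entire content is the elementary identification of $(E_\chi v_C)_1$ with a non-zero multiple of $\chi(C)$, after which the result is a direct reading of the clique-coclique machinery already recorded in Theorem~\ref{clique_coclique_baound} and Corollary~\ref{clique_vs_coclique}.
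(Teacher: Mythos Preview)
Your proof is correct and follows exactly the route the paper intends: the paper simply states that the corollary is observed ``using Corollary~\ref{clique_vs_coclique} and Equation~(\ref{idempotent}),'' and your computation of $(E_\chi v_C)_1 = \frac{\chi(1)}{|G|}\chi(C)$ is precisely the way to make that observation explicit.
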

In other words, provided that the clique-coclique bound holds with
equality, for any module of $\Gamma_G$ (other than the trivial module)
the projection of at most one of the vectors $v_C$ and $v_S$ will be
non-zero, where $S$ is any maximum independent set and $C$ is any
maximum clique.

The outline for the proof of Theorem~\ref{main_Alt} is the
following:
\begin{enumerate}
\item Determine the irreducible characters of $G_n$ (Section~\ref{rep_theory}).
\item Show that clique-coclique bound holds with equality for
  $\Gamma_{G_n}$ (Section~\ref{standard_module}).

\item Find a maximum clique $C$ of $\Gamma_{G_n}$ such that $\chi(C)\neq
  0$, for any character $\chi$ which is not the standard character.
  (Section~\ref{standard_module}).

\item Show that the characteristic vector of any maximum independent set of
  $\Gamma_{G_n}$ lies in the direct sum of the trivial and the standard
  modules (Section~\ref{standard_module}).

\item Find a basis for the standard module---this basis is made up of
  characteristic vectors of cosets of point stabilizers
  (Section~\ref{standard_module}).

\item Complete the proof by showing that the only linear combination
  of the basis vectors that gives the characteristic vector for a
  maximum independent set, is the characteristic vector for the coset
  of the stabilizer of a point (Section~\ref{main_proof}).

\end{enumerate}

We conclude this section with the following lemma which will be needed
in Section~\ref{main_proof}. This proof was originally done by Mike
Newman (but has not been published elsewhere). The {\it eigenvalues of
a graph} are the eigenvalues of the adjacency matrix of the graph.

\begin{prop}\label{w}
  Let $X$ be a $k$-regular graph and let $\tau$ be the least
  eigenvalue of $X$. Assume that there is a collection $\mathcal{C}$ of
  cliques of $X$ of size $w$, such that every edge of $X$ is contained
  in a fixed number of elements of $\mathcal{C}$. Then
\[
\tau\geq -\frac{k}{w-1}.
\]
\end{prop}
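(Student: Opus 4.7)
The plan is to exploit the clique collection $\mathcal{C}$ by forming the vertex--clique incidence matrix $B$ of size $|V(X)|\times|\mathcal{C}|$, where $B_{v,C}=1$ if $v\in C$ and $0$ otherwise, and then to analyze $BB^T$. Since any two vertices lying in a common clique of $\mathcal{C}$ must be adjacent in $X$, the off-diagonal entries of $BB^T$ vanish on non-edges. Writing $\lambda$ for the fixed number of cliques in $\mathcal{C}$ through each edge, one gets $(BB^T)_{u,v}=\lambda$ for every edge $uv$, while the diagonal entry $(BB^T)_{v,v}$ equals the number $r_v$ of cliques in $\mathcal{C}$ containing $v$.

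The next step is to argue that $r_v$ is independent of $v$. I would double-count the pairs $(u,C)$ with $u$ a neighbor of $v$ and $C\in\mathcal{C}$ containing both $u$ and $v$. Counting by $u$ gives $k\lambda$, while counting by $C$ gives $r_v(w-1)$, yielding
\[
r_v = \frac{k\lambda}{w-1},
\]
a constant $r$. Hence $BB^T = rI + \lambda A$, where $A$ is the adjacency matrix of $X$.

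Finally, $BB^T$ is positive semidefinite, so every eigenvalue of $rI+\lambda A$ is nonnegative. Applying this to an eigenvector of $A$ realizing the least eigenvalue $\tau$ gives $r+\lambda\tau\ge 0$, and substituting $r/\lambda = k/(w-1)$ yields $\tau \ge -k/(w-1)$.

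I do not expect serious obstacles here: the constancy of $r_v$ is the only nontrivial point, and it follows from a one-line double count. The argument is essentially the standard Hoffman-style lower bound derived from a clique cover of the edge set, and the only thing to be careful about is that two vertices in the same clique of $\mathcal{C}$ are automatically adjacent, which is exactly what makes $BB^T$ have support only on the diagonal and on edges.
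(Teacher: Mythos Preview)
Your proposal is correct and follows essentially the same approach as the paper: form the vertex--clique incidence matrix, observe that $BB^T=rI+\lambda A$ with $r=k\lambda/(w-1)$, and use positive semidefiniteness to bound $\tau$. The only difference is cosmetic---you spell out the double count for the constancy of $r_v$, while the paper simply asserts it.
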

\begin{proof}
  Assume that every edge of $X$ is contained exactly in $y$ cliques in
  $\mathcal{C}$. Then every vertex of $X$ is contained exactly in
  $\frac{k}{w-1}y$ cliques. Define a $01$-matrix $N$ as follows: the
  rows of $N$ are indexed by the vertices of $X$ and the columns are
  indexed by the members of $\mathcal{C}$; the entry $N_{(x,C)}$ is 1
  if and only if the vertex $x$ is in the clique $C$. We will,
  therefore, have
\[
NN^T=\frac{yk}{w-1}I+yA(X),
\]
where $I$ is the identity matrix and $A(X)$ is the adjacency matrix  of $X$. Thus
\[
\frac{k}{w-1}I+A(X)=\left(\frac{1}{\sqrt{y}}N\right)\left(\frac{1}{\sqrt{y}}N\right)^T,
\]
which implies that the matrix
\[
A(X)-\frac{-k}{w-1}I
\]
is positive semi-definite and, therefore, if $\tau$ is the least eigenvalue of $X$, then 
\[
\tau\geq \frac{-k}{w-1}.\qedhere
\]
\end{proof}

\section{Background on the representation theory of the symmetric
  group}\label{rep_theory}

In this section we provide a brief overview of the facts from the
representation theory of the symmetric group that we will make use of
in this paper. The reader may refer to any of the books
\cite{MR1153249,MR644144} and \cite{MR1824028} for more a detailed
discussion of this topic.

Let $G$ be a group and let $V$ be a representation of $G$ with the
character $\chi$. For any subgroup $H\leq G$, the {\it restriction} of
$V$ to $H$, denoted by $V\downarrow_H^G$, is the representation of $H$
with the character $\chi\downarrow_H^G$, where
\[
\chi\downarrow_H^G(h)=\chi(h),\quad h\in H.
\]

A weakly decreasing sequence $\lambda=[\lambda_1,\ldots, \lambda_k]$
of positive integers is called a {\it partition} of $n$ if $\sum
\lambda_i=n$. If $\lambda$ is a partition of $n$ then we write
$\lambda \vdash n$.  To any partition
$\lambda=[\lambda_1,\ldots,\lambda_k]$ of $n$, we associate a {\it
  Young diagram}, which is an array of $n$ boxes having $k$
left-justified rows with row $i$ containing $\lambda_i$ boxes, for
$1\leq i \leq k$. The {\it transpose} (or {\it conjugate}) of a
partition $\lambda\vdash n$, which is denoted by $\hat{\lambda}$, is
the partition corresponding to the Young diagram which is obtained
from that of $\lambda$ by interchanging rows and columns. A partition
is {\it symmetric} if it is equal to its own conjugate.
Figure~\ref{young_example} displays the Young diagram of the partition
$\lambda=[5,3,3,2,1,1]$ of $15$ and its transpose.

\begin{figure}[h!]
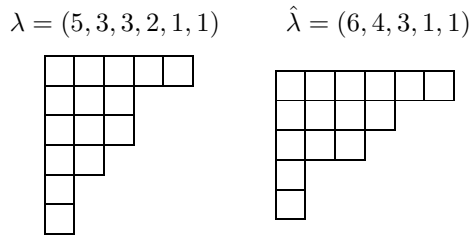

\begin{center}
\begin{tabular}{cc}
$\lambda=(5,3,3,2,1,1)$ \qquad & \quad $\hat{\lambda}=(6,4,3,1,1)$\\ [.2cm]
$\yng(5,3,3,2,1,1)$ & $\yng(6,4,3,1,1)$
\end{tabular}
\caption{An example of a Young diagram and its conjugate.}
\label{young_example}
\end{center}
\end{figure}

It is well-known that there is a one-to-one correspondence between the
partitions of $n$ and the irreducible representations of
$\sym(n)$. Indeed, any irreducible representation of $\sym(n)$ is of
the form $S^\lambda$ where $\lambda$ is a partition of $n$.  The
spaces $S^{\lambda}$, which are called the {\it Specht modules}, are
$\mathbb{C}$-algebras generated by $\lambda$-polytabloids (see
\cite{MR1824028} for more details). 

Our next result is the {\it hook-length formula}; this
result states a way to evaluate the dimension of the representation $S^\lambda$ of
$\sym(n)$, based on properties of the partition $\lambda$. Its
proof relies on the Frobenius formula (see \cite{MR1153249}).  For any
box of the Young diagram of $\lambda$, the corresponding {\it hook
  length} is one plus the number of boxes horizontally to the right
and vertically below the box.  Define $\hl(\lambda)$ to be the product
of all hook lengths of $\lambda$.

\begin{lem}\label{hl_formula}
  If $\lambda\vdash n$, then dimension of the irreducible
  representation of $\sym(n)$ corresponding to $\lambda$ is
  $n!/\hl(\lambda)$.
\end{lem}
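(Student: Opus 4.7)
The plan is to deduce the formula from the Frobenius character formula. Since $\dim S^\lambda = \chi^\lambda(1)$, and the Frobenius formula gives
\[
\chi^\lambda(1) = \frac{n!}{\ell_1!\,\ell_2!\,\cdots\,\ell_k!}\prod_{1\le i<j\le k}(\ell_i-\ell_j),
\]
where $\ell_i = \lambda_i + k - i$ (after padding $\lambda$ with zero parts so that it has exactly $k$ entries), it will suffice to prove the combinatorial identity
\[
\hl(\lambda) \;=\; \frac{\ell_1!\,\ell_2!\,\cdots\,\ell_k!}{\prod_{i<j}(\ell_i-\ell_j)}.
\]
I would first recall or re-derive the Frobenius formula itself by specialising the Schur-polynomial expression for $\chi^\lambda$ to the identity element, the Vandermonde determinant producing the factor $\prod_{i<j}(\ell_i-\ell_j)$. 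This part is standard and independent of the combinatorics of hooks.

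The heart of the argument is then a purely combinatorial claim about the Young diagram: for each row $i$, the multiset of hook lengths in row $i$ of $\lambda$ is precisely
\[
\{1,2,\ldots,\ell_i\}\,\setminus\,\{\ell_i-\ell_j:\,i<j\le k\}.
\]
I would verify this by direct inspection. Writing the hook length at cell $(i,c)$ as $h(i,c)=(\lambda_i-c)+(\hat\lambda_c-i)+1$ and letting $c$ run from $1$ to $\lambda_i$, one tracks how these values decrease as $c$ increases and checks that the gaps in $\{1,\ldots,\ell_i\}$ occur exactly at the values $\ell_i-\ell_j$ for $j>i$; intuitively, each ``missing'' hook length records a later row whose right endpoint fails to reach column $c$. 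Granting this description,
\[
\prod_{c=1}^{\lambda_i}h(i,c) \;=\; \frac{\ell_i!}{\prod_{j>i}(\ell_i-\ell_j)},
\]
and taking the product over all rows $i$ telescopes to give the identity above, completing the proof upon substitution into the Frobenius formula.

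The step I expect to be the main obstacle is the row-by-row description of hook lengths: it is elementary but requires careful bookkeeping between $\lambda$ and its conjugate $\hat\lambda$, since the hook at $(i,c)$ depends on both the arm in row $i$ and the leg in column $c$. Everything else is a direct substitution and a telescoping of factorials, and the Frobenius formula is invoked as a black box from the character theory of $\sym(n)$.
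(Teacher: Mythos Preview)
Your proposal is correct and is precisely the standard derivation the paper has in mind: the paper does not give its own proof but simply notes that it ``relies on the Frobenius formula'' and cites Fulton--Harris, where exactly the argument you outline (Frobenius dimension formula plus the row-by-row identification of hook lengths with $\{1,\ldots,\ell_i\}\setminus\{\ell_i-\ell_j:j>i\}$) is carried out. There is nothing to add or change.
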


 The following theorem establishes a connection
between the irreducible representations of $G_n$ and those of the
symmetric group. A general form of this theorem has been proved in
\cite[Section 5.1]{MR1153249}.  (Throughout this paper, we will use
the notation of Theorem~\ref{reps_of_alt}.)

\begin{thm}\label{reps_of_alt} Let $\lambda$ be a partition of $n$ and
  let $W$ and $\widehat{W}$ be the restrictions of $S^\lambda$ and
  $S^{\hat{\lambda}}$ to $G_n$, respectively. Then
\begin{enumerate}[(a)]
\item if $\lambda$ is not symmetric, then $W$ is an irreducible
  representation of $G_n$ and is isomorphic to $\widehat{W}$; and
\item if $\lambda$ is symmetric, then $W=W'\oplus W''$, where $W'$ and
  $W''$ are irreducible but not isomorphic representations of $G_n$.
\end{enumerate}
All the irreducible representations of $G_n$ arise uniquely in this way.
\end{thm}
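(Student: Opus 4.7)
The plan is to apply Clifford theory to the normal inclusion $G_n \triangleleft \sym(n)$, which has index $2$. The relevant general fact is that for any irreducible complex representation $V$ of $\sym(n)$, the restriction $V\!\downarrow_{G_n}^{\sym(n)}$ either remains irreducible or splits into two non-isomorphic $G_n$-irreducibles of equal dimension, and which case occurs is governed by whether $V \otimes \mathrm{sgn} \cong V$, where $\mathrm{sgn}$ is the sign character of $\sym(n)$. I would either quote this from the standard references cited just before the theorem, or prove it quickly by Frobenius reciprocity combined with the fact that $\mathrm{Ind}_{G_n}^{\sym(n)}\mathrm{triv}$ is $1 \oplus \mathrm{sgn}$.

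Next I would use the classical Specht-module identity $S^\lambda \otimes \mathrm{sgn} \cong S^{\hat\lambda}$, so the twisting criterion reads: $S^\lambda\!\downarrow_{G_n}^{\sym(n)}$ is irreducible if and only if $\lambda \neq \hat\lambda$. This yields part~(a) at once: if $\lambda$ is not symmetric, then $W$ is irreducible; and since $\mathrm{sgn}$ restricts trivially to $G_n$, the restrictions of $S^\lambda$ and $S^{\hat\lambda} = S^\lambda \otimes \mathrm{sgn}$ give the same $G_n$-representation, whence $W \cong \widehat{W}$. For part~(b), if $\lambda$ is symmetric then $S^\lambda \cong S^\lambda \otimes \mathrm{sgn}$ and Clifford theory forces $W = W' \oplus W''$ with $W'$ and $W''$ non-isomorphic irreducibles of equal dimension.

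For the final claim that every irreducible of $G_n$ arises uniquely in this fashion, I would compare counts of conjugacy classes. A class of $\sym(n)$ contained in $G_n$ (i.e. of even-permutation cycle type) splits into two $G_n$-classes precisely when its cycle type consists of distinct odd parts, and a classical bijection identifies partitions of $n$ into distinct odd parts with self-conjugate partitions of $n$. Thus if $s$ is the number of symmetric partitions of $n$ and $p$ the total number of partitions, the number of conjugacy classes of $G_n$ equals $\tfrac{1}{2}(p-s) + 2s = \tfrac{1}{2}(p+3s)$; on the other hand the construction produces $\tfrac{1}{2}(p-s)$ irreducibles from the non-symmetric $\lambda$ (paired with $\hat\lambda$) and $2s$ from the symmetric ones, giving the same total. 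Uniqueness follows because distinct unordered pairs $\{\lambda,\hat\lambda\}$ (or distinct symmetric $\lambda$) yield non-isomorphic restrictions, a fact that drops out of characterwise comparison on $G_n$.

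The genuine obstacle is the enumerative reconciliation in the last step: the bijection between partitions into distinct odd parts and self-conjugate partitions, together with the combinatorial criterion for class-splitting in $\alt(n)$, is what forces the numerology to match. Everything else is a direct application of Clifford theory and the sign-twisting identity for Specht modules, so my writeup would spend most of its space on this counting paragraph and only cite the Clifford-theoretic dichotomy.
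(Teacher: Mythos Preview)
The paper does not actually prove Theorem~\ref{reps_of_alt}; it simply cites Section~5.1 of Fulton--Harris. Your Clifford-theoretic outline is the standard argument (and is essentially what that reference does), and it is correct in substance.

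One small point to tighten: in your enumerative step you assert that the number of conjugacy classes of $G_n$ equals $\tfrac12(p-s)+2s$, which implicitly uses that the number of \emph{non-split} even cycle-types is $\tfrac12(p-s)$, equivalently that the number $e$ of even cycle-types satisfies $e=\tfrac12(p+s)$. This is true but not immediate from the distinct-odd-parts bijection alone; it follows, for instance, from the character identity $\sum_{\chi}\langle\chi,\chi\cdot\mathrm{sgn}\rangle=e-o$ together with $\chi^\lambda\cdot\mathrm{sgn}=\chi^{\hat\lambda}$, which gives $e-o=s$. Alternatively you can sidestep the count entirely: every irreducible of $G_n$ occurs in the restriction of some $S^\lambda$ (the regular representation of $\sym(n)$ restricts to twice the regular representation of $G_n$), so the list is exhaustive, and the distinctness you already sketch via Frobenius reciprocity finishes the ``uniquely'' claim.
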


For any conjugacy class $c$ of $G_n$, either $c$ is also a conjugacy
class in $\sym(n)$ or $c\cup c'$ is a conjugacy class in $\sym(n)$,
where $c'=tct^{-1}$, for some $t\notin G_n$.  The second type of
conjugacy classes are said to be {\it split}. A conjugacy class $c$ of
$G_n$ is split if and only if all the cycles in the cycle
decomposition of an element of $c$ have odd lengths and no two cycles
have the same length.

Suppose $c$ is a conjugacy class of $\sym(n)$ that is not a conjugacy
class in $G_n$. Assume that the decomposition of an element of $c$
contains cycles of odd lengths $q_1>q_2>\cdots>q_r$. Then we say $c$
{\it corresponds} to the symmetric partition
$\lambda=[\lambda_1,\lambda_2,\ldots]$ of $n$ if $q_1=2\lambda_1-1$,
$q_2=2\lambda_2-3$, $q_3=2\lambda_3-5,\ldots$. This is a
correspondence between a split conjugacy classes of $G_n$ and the
symmetric partitions  of $n$. See Figure~\ref{correspondence}.
\begin{figure}[h!]
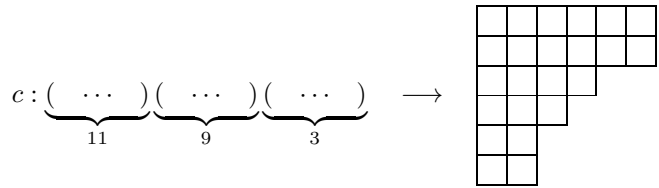

\centering
\[ c:\underbrace{(\quad\cdots\quad)}_{11} \underbrace{(\quad\cdots\quad)}_{9} 
  \underbrace{(\quad\cdots\quad)}_{3}\quad\longrightarrow \quad \yng(6,6,4,3,2,2)\]
\caption{\small The correspondence between split conjugacy classes and symmetric partitions}
\label{correspondence}
\end{figure}
Using this correspondence, we can give equations for the irreducible
characters of $G_n$ in terms of the characters of $\sym(n)$.  This
result is also proved in Section 5.1 of \cite{MR1153249}.

\begin{thm}\label{char_values_of_alt}
  Let $\lambda$ be a partition of $n$ and let $\chi^\lambda$ be the
  character of $S^\lambda$. Assume $c$ is a non-split conjugacy class
  of $G_n$ and $c'\cup c''$ is a pair of split conjugacy classes in
  $G_n$. Let $\sigma\in c$, $\sigma'\in c'$, $\sigma''\in c''$ and
  $\bar{\sigma}\in c'\cup c''$.
\begin{enumerate}[(a)]

\item If $\lambda$ is not symmetric, let $\chi_\lambda$ be the
  character of $W$, then
\[ 
\chi_\lambda(\sigma)=\chi^\lambda(\sigma)\quad\text{and}\quad 
 \chi_\lambda(\sigma')=\chi_\lambda(\sigma'')=\chi^\lambda(\bar{\sigma}).
\]

\item If $\lambda$ is symmetric, let $\chi_\lambda'$ and
  $\chi_\lambda''$ be the characters of $W'$ and $W''$, respectively, then
\[\chi_\lambda'(\sigma)=\chi_\lambda''(\sigma)=\frac{1}{2}\chi^\lambda(\sigma),\] and
\begin{enumerate}[(i)]
\item if $c'\cup c''$ does not correspond to $\lambda$ then \[
  \chi_\lambda'(\sigma')= \chi_\lambda'(\sigma'')=
  \chi_\lambda''(\sigma')=\chi_\lambda''(\sigma'')=\frac{1}{2}\chi^\lambda(\bar{\sigma}).\]
\item if $c'\cup c''$ corresponds to $\lambda$, then
\[ 
\chi_\lambda'(\sigma')= \chi_\lambda''(\sigma'')=x\quad\text{and}\quad \chi_\lambda'(\sigma'')=\chi_\lambda''(\sigma')=y.
\]
\end{enumerate}
The values of $x$ and $y$ are
\[
\frac{1}{2}\left[(-1)^m\pm\sqrt{(-1)^m q_1\dotsm q_r}\right],
\]
where $m=\frac{n-r}{2}$ and the cycle decomposition of an element of $c'\cup c''$ has cycles of odd lengths $q_1,\ldots,q_r$.
\end{enumerate}
\end{thm}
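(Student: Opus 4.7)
The plan is to exploit the outer automorphism $\tau(g)=tgt^{-1}$ of $G_n$ induced by any fixed $t\in\sym(n)\setminus G_n$; for a $G_n$-representation $V$ with character $\chi$, write $V^\tau$ for the twist, so that $\chi^\tau(g)=\chi(tgt^{-1})$. The non-split conjugacy classes of $G_n$ are precisely those fixed setwise by $\tau$, while each split pair $c',c''$ is swapped by $\tau$. Part~(a) is then essentially a tautology: $\chi_\lambda=\chi^\lambda\downarrow_{G_n}^{\sym(n)}$ is the restriction of a class function on $\sym(n)$, a non-split $G_n$-class coincides with an $\sym(n)$-class, and for a split pair both $\sigma'$ and $\sigma''$ still lie in the single $\sym(n)$-class $c'\cup c''$, so they share the common value $\chi^\lambda(\bar\sigma)$.

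For part~(b), write $\chi^\lambda\downarrow_{G_n}=\chi'_\lambda+\chi''_\lambda$. First I would show $(W')^\tau\cong W''$: if instead $\tau$ preserved $W'$ up to isomorphism, then $W'$ would extend to an $\sym(n)$-representation, giving a non-trivial $\sym(n)$-decomposition of $S^\lambda$ and contradicting its irreducibility. Hence $\chi'_\lambda(tgt^{-1})=\chi''_\lambda(g)$ for every $g\in G_n$. When $\sigma$ lies in a non-split class, $t\sigma t^{-1}$ is $G_n$-conjugate to $\sigma$, so $\chi'_\lambda(\sigma)=\chi''_\lambda(\sigma)$, and combined with $\chi'_\lambda+\chi''_\lambda=\chi^\lambda$ this yields $\chi'_\lambda(\sigma)=\tfrac12\chi^\lambda(\sigma)$. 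For $\sigma'\in c'$ with $t\sigma' t^{-1}\in c''$ the swap gives $\chi''_\lambda(\sigma'')=\chi'_\lambda(\sigma')$ for every $\sigma''\in c''$, which already produces the symmetry asserted in case~(b)(ii). When $c'\cup c''$ does not correspond to $\lambda$, a further argument---comparing the characters of $W'$ and $W''$ via column orthogonality against the (known) values from part~(a)---shows $\chi'_\lambda(\sigma')=\chi''_\lambda(\sigma')$, giving the common value $\tfrac12\chi^\lambda(\bar\sigma)$ of case~(b)(i).

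The main obstacle is case~(b)(ii), where the values become irrational. Set $\delta=\chi'_\lambda(\sigma')-\chi''_\lambda(\sigma')$, so that $x,y=\tfrac12\bigl(\chi^\lambda(\bar\sigma)\pm\delta\bigr)$; the task reduces to determining $\delta^2$ and the sum $x+y$. Apply the second orthogonality relation $\sum_\mu|\chi_\mu(\sigma')|^2=|C_{G_n}(\sigma')|=q_1\cdots q_r$ and subtract the corresponding $\sym(n)$-identity; by case~(b)(i) and the values from part~(a) every contribution cancels except the one from the distinguished pair $W',W''$, and the remaining equation simplifies to $\delta^2=(-1)^m q_1\cdots q_r$, with the sign $(-1)^m$ (where $m=(n-r)/2$) tracking the parity of $\sigma'$. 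Combined with $\chi'_\lambda(\sigma')+\chi''_\lambda(\sigma')=\chi^\lambda(\bar\sigma)=(-1)^m$, which is a Murnaghan--Nakayama evaluation for the symmetric $\lambda$ at its corresponding cycle type, the resulting linear system pins down $x$ and $y$ as stated; the labelling convention for $c'$ versus $c''$ then fixes the signs.
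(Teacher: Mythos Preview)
The paper does not supply its own proof of this theorem; it is quoted as background and attributed to Section~5.1 of Fulton--Harris, so there is no in-paper argument to compare yours against. That said, your outline is largely the standard Clifford-theoretic one, and part~(a) together with the first half of~(b) (the $\tau$-twist argument showing $(W')^\tau\cong W''$, hence $\chi'_\lambda(\sigma)=\chi''_\lambda(\sigma)$ on non-split classes and the swap $\chi'_\lambda(\sigma')=\chi''_\lambda(\sigma'')$ on split pairs) is correct and essentially what any source does.

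The genuine gap is at~(b)(i). You assert that column orthogonality, together with the values from part~(a), forces $\chi'_\lambda(\sigma')=\chi''_\lambda(\sigma')$ whenever the split pair does \emph{not} correspond to~$\lambda$, but you do not carry this out, and in fact orthogonality alone is not enough. What orthogonality gives you is that the class functions $\psi_\mu:=\chi'_\mu-\chi''_\mu$, for $\mu$ symmetric, are mutually orthogonal, supported on the split classes, and satisfy $\psi_\mu(\sigma')=-\psi_\mu(\sigma'')$; since the number of symmetric $\mu$ equals the number of split pairs, the $\psi_\mu$ form an orthogonal basis for the space of such ``odd'' class functions. But an orthogonal basis need not consist of functions each supported on a single split pair, so this does not by itself prove $\psi_\lambda$ vanishes on every split pair except the one corresponding to~$\lambda$. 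In Fulton--Harris the identification of \emph{which} split pair supports $\psi_\lambda$ is obtained from an explicit character computation (essentially a Frobenius/Murnaghan--Nakayama evaluation), not from orthogonality. Your argument for~(b)(ii) then inherits this gap, since the cancellation you invoke when subtracting the $\sym(n)$ and $G_n$ column-orthogonality relations uses~(b)(i) for every symmetric $\mu\neq\lambda$.

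A smaller point: in the complex case ($m$ odd) your derivation of $\delta^2$ tacitly uses $y=\bar x$, i.e.\ that $(\sigma')^{-1}$ lies in $c''$ rather than $c'$; this is true but should be justified.
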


We will use the notation of Theorem~\ref{char_values_of_alt}
throughout this paper and hence want to emphasize that for
representations of $\sym(n)$ we use $\lambda$ as a superscript and for
representations of $\alt(n)$, the $\lambda$ is a subscript.

The next theorem is known as the {\it Murnaghan-Nakayama Rule}; it
gives a recursive way to determine the value of a character on a
conjugacy class. Before we can state this result, we need to define
some terms. The {\it $(i,j)$-block} in a Young diagram is the
block in the $i$-th row (from the top) and the $j$-th column (from the
left). If a Young diagram contains an $(i,j)$-block but not a
$(i+1,j+1)$-block then the $(i,j)$-block is part of what is called the
{\it boundary} of the Young diagram. A {\it skew hook} of $\lambda$ is
an edge-wise connected (meaning that all blocks are either side
by side or one below the other) part of the boundary blocks with the
property that removing them leaves a smaller proper Young diagram.
The length of a skew hook is the number of blocks it contains.

\begin{thm}\label{nakayama}
  If $\lambda \vdash n$ and $\sigma \in \sym(n)$ can be written as a
  product of an $m$-cycle and a disjoint permutation $h\in \sym(n-m)$,
  then
\[
\chi^{\lambda}(\sigma)=\sum_{\mu} (-1)^{r(\mu)}\chi^{\mu}(h),
\]
where the sum is over all partitions $\mu$ of $n-m$ that are obtained
from $\lambda$ by removing a skew hook of length $m$, and $r(\mu)$ is
one less than the number of rows of the removed skew hook.
\end{thm}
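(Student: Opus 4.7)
My approach is to prove the rule through the ring of symmetric functions and the Frobenius characteristic map. Under this map, the irreducible character $\chi^\lambda$ of $\sym(n)$ corresponds to the Schur function $s_\lambda$, and power sums expand as $p_\nu = \sum_{\lambda \vdash |\nu|} \chi^\lambda_\nu \, s_\lambda$, where $\chi^\lambda_\nu$ denotes the value of $\chi^\lambda$ on the conjugacy class of cycle type $\nu$. The crux of the argument is then the following Pieri-type identity for multiplication of a Schur function by a power sum:
\[
p_m \cdot s_\mu \;=\; \sum_{\lambda} (-1)^{r(\lambda/\mu)}\, s_\lambda,
\]
where the sum runs over all partitions $\lambda \vdash |\mu|+m$ for which $\lambda/\mu$ is a skew hook of length $m$, and $r(\lambda/\mu)$ is one less than the number of rows of that skew hook.

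Granting this identity, the rule itself will fall out of a short calculation. Writing $\sigma$ as the disjoint product of an $m$-cycle and $h \in \sym(n-m)$ of cycle type $\nu$, we have $p_{(m,\nu)} = p_m \cdot p_\nu$, so expanding in the Schur basis two different ways gives
\[
\sum_\lambda \chi^\lambda_{(m,\nu)}\, s_\lambda \;=\; p_m \sum_\mu \chi^\mu_\nu\, s_\mu \;=\; \sum_\mu \chi^\mu_\nu \sum_{\lambda} (-1)^{r(\lambda/\mu)}\, s_\lambda .
\]
Since $\{s_\lambda\}$ is a basis, comparing the coefficient of a fixed $s_\lambda$ yields $\chi^\lambda(\sigma) = \sum_\mu (-1)^{r(\lambda/\mu)} \chi^\mu(h)$, which is exactly the statement of the theorem.

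To establish the Pieri-type identity, I would use the Jacobi-Trudi determinant $s_\mu = \det(h_{\mu_i - i + j})$ together with the beta-number (abacus) encoding of partitions. The generating-function relation $P(t)H(t) = H'(t)$ provides a clean expansion of $p_m \cdot h_k$ as a telescoping signed sum, and substituting this into the Jacobi-Trudi determinant and expanding by multilinearity in rows produces a signed sum of determinants indexed by the choice of row in which some $h_k$ is replaced by $h_{k+m}$. The final step is to recognize that, after a sign-reversing involution cancels the degenerate contributions, the surviving terms are in bijection with single-bead slides of length $m$ on the abacus of $\mu$, which in turn correspond precisely to attachments of a length-$m$ skew hook, with sign equal to $(-1)^{r(\lambda/\mu)}$. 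The hardest part of the argument is this last bookkeeping step: matching the sign arising from the row-permutation of the Jacobi-Trudi determinant with the combinatorial statistic $r(\lambda/\mu)$, which requires a careful but standard analysis of how the number of beads jumped over during the abacus slide equals the row-count of the skew hook minus one.
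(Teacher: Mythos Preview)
The paper does not actually prove this theorem; immediately after the statement it simply refers the reader to Sagan's textbook \cite{MR1824028}. So there is no in-paper argument to compare against, and any self-contained proof you supply goes beyond what the authors do.

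That said, your outline is one of the standard correct routes. The reduction step is clean: once one has the identity
\[
p_m\, s_\mu \;=\; \sum_{\lambda:\ \lambda/\mu\ \text{a skew hook of length } m} (-1)^{r(\lambda/\mu)}\, s_\lambda,
\]
the Murnaghan--Nakayama rule follows exactly as you wrote, by expanding $p_{(m,\nu)}=p_m p_\nu$ in the Schur basis and comparing coefficients. The one place I would push back is your proposed mechanism for proving the identity itself. The relation $P(t)H(t)=H'(t)$ gives Newton's identity $n h_n=\sum_{k=1}^n p_k h_{n-k}$; it does not directly produce a ``telescoping signed sum'' for $p_m h_k$, so the Jacobi--Trudi route as you describe it would need some repair. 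The argument that matches your abacus description most naturally uses the bialternant formula $s_\mu=a_{\mu+\delta}/a_\delta$ instead: since $p_m=\sum_i x_i^m$, multiplying the alternant $a_{\mu+\delta}=\det(x_i^{\mu_j+\ell-j})$ by $p_m$ replaces one exponent $\mu_j+\ell-j$ by $\mu_j+\ell-j+m$; sorting the resulting exponent vector back to strictly decreasing order is exactly a single bead-slide of length $m$ on the abacus, the sign of the row permutation equals $(-1)^{r(\lambda/\mu)}$, and terms with a repeated exponent vanish. That is the version of the computation in which your final ``bookkeeping'' paragraph goes through verbatim. If you want to keep Jacobi--Trudi, the cleanest fix is to use the expansion $p_m=\sum_{r=1}^{m}(-1)^{r-1}s_{(m-r+1,1^{r-1})}$ and then apply the Pieri rules, but that is more work than the bialternant approach.
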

For a proof of this theorem, the reader may refer to \cite{MR1824028}.
We will state, without proof, two simple applications of this rule.

\begin{cor}\label{cor:appofMN}
Let $\sigma$ is an $n$-cycle in $\sym(n)$, then 
\[
\chi^\lambda(\sigma) =
 \begin{cases}
 (-1)^{n-\lambda_1},   & \textrm{if $\lambda=[\lambda_1,1^{n-\lambda_1}]$};\\
  0,                 &\textrm{otherwise.} 
 \end{cases}
\]
\end{cor}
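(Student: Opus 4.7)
The plan is to apply the Murnaghan-Nakayama rule (Theorem~\ref{nakayama}) directly with $m = n$, since $\sigma$ is a single $n$-cycle with no disjoint part. When $m=n$, the ``disjoint permutation $h$'' lives in $\sym(0)$, so $h$ is the identity of the trivial group, and the only partition of $n - m = 0$ is the empty partition, with $\chi^{\emptyset}(h) = 1$. Therefore the formula reduces to
\[
\chi^{\lambda}(\sigma) = \sum_{\mu} (-1)^{r(\mu)},
\]
where the sum ranges over ways to remove a skew hook of length $n$ from $\lambda$ and reach the empty partition. In other words, the sum is nonzero only when the Young diagram of $\lambda$ is \emph{itself} a single skew hook.

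The next step is to identify precisely when a Young diagram is a single skew hook of length $n$. By the definition of a skew hook (an edge-wise connected part of the boundary whose removal leaves a Young diagram), the diagram of $\lambda$ is a skew hook if and only if it contains no $2 \times 2$ subblock, i.e., whenever the $(i,j)$-box is in $\lambda$, the $(i+1,j+1)$-box is not. An elementary check shows this is equivalent to $\lambda$ having the hook shape $[\lambda_1, 1^{n-\lambda_1}]$. For any other partition, no valid skew hook of length $n$ exists, and hence $\chi^{\lambda}(\sigma) = 0$.

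Finally, assuming $\lambda = [\lambda_1, 1^{n-\lambda_1}]$, I would count the number of rows of the unique skew hook that equals the entire diagram: the hook occupies the first row (one row) together with the $n - \lambda_1$ single-box rows beneath the $(1,1)$-box, giving a total of $(n - \lambda_1) + 1$ rows. By the definition of $r(\mu)$ as one less than this row count, we get $r(\mu) = n - \lambda_1$, so the single surviving term contributes $(-1)^{n - \lambda_1}$. This matches the claimed value. The only genuinely nontrivial step is the geometric lemma identifying skew-hook-shaped diagrams with hook partitions, and this is a short case analysis on the boundary.
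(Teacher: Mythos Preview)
Your argument is correct and is precisely the ``simple application'' of the Murnaghan--Nakayama rule that the paper has in mind; the paper itself states Corollary~\ref{cor:appofMN} without proof, and your derivation (removing a single skew hook of length $n$, identifying when the whole diagram is a skew hook, and reading off the row count) is the standard and intended one.
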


Partitions $\lambda\vdash n$ of the form
$\lambda=[\lambda_1,1^{n-\lambda_1}]$ are called {\it hooks},
similarly partitions of the form $[\lambda_1,2,1^{n-\lambda_1-2}]$,
for $\lambda_1>1$ are called {\it near hooks}.

\begin{cor}\label{cor:appofMNtwo}
Let $\sigma$ be the product of two disjoint $n/2$-cycles in $\sym(n)$, then 
\[
\chi^\lambda(\sigma) \in \{0,\pm 1,\pm 2\}.
\]
\end{cor}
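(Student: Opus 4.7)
The plan is to apply the Murnaghan--Nakayama rule once to strip off one of the two $(n/2)$-cycles of $\sigma$. Set $m = n/2$ and write $\sigma = c\,h$, where $c$ and $h$ are disjoint $m$-cycles. Theorem~\ref{nakayama} then gives
\[
\chi^\lambda(\sigma) \;=\; \sum_{\mu} (-1)^{r(\mu)}\, \chi^\mu(h),
\]
with the sum ranging over $\mu \vdash m$ obtained from $\lambda$ by removing a skew hook of length $m$. Since $h$ is an $m$-cycle in $\sym(m)$, Corollary~\ref{cor:appofMN} kills the contributions from every non-hook $\mu$ and makes each surviving contribution $\pm 1$. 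Thus $\chi^\lambda(\sigma)$ is a signed count of $\pm 1$'s, one for each hook partition $\mu \vdash m$ reachable from $\lambda$ by a single $m$-rim-hook removal, and it suffices to show that the number of such $\mu$ is at most $2$.

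The main obstacle is precisely this at-most-two claim, and I would establish it using the $m$-core/$m$-quotient description of $\lambda$ on the $m$-abacus (as in James--Kerber, \cite{MR1153249}). Every hook $[k,1^{m-k}]$ of size $m$ has empty $m$-core (it is itself an $m$-rim-hook), so $\lambda$ must also have empty $m$-core, and its $m$-quotient is then an $m$-tuple of partitions of total size $|\lambda|/m = 2$. A direct beta-number calculation shows that the $m$-quotient of $[k,1^{m-k}]$ is a single box on one particular runner, and that the $m$ hook partitions of size $m$ correspond bijectively to the $m$ runners. Consequently, $\lambda$'s $m$-quotient is of one of two kinds: either (i) a single runner carrying a $2$-box partition, which must be $(2)$ or $(1,1)$, with all other runners empty; or (ii) two different runners each carrying a single box, with the remaining runners empty. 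An $m$-rim-hook removal from $\lambda$ amounts to deleting a removable box from its $m$-quotient; in case~(i) there is exactly one such deletion and hence one hook $\mu$, while in case~(ii) there are exactly two deletions and hence two distinct hooks~$\mu$.

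Putting everything together, $\chi^\lambda(\sigma)$ is a sum of at most two $\pm 1$'s and therefore takes values in $\{0,\pm 1,\pm 2\}$. A more elementary but significantly more tedious alternative, for readers wishing to avoid the abacus, is a direct case analysis of how a ribbon of length $m$ can be attached to a hook of size $m$ so that the union remains a partition of $2m$; this enumeration produces the same bound on the number of valid $\mu$.
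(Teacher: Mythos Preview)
Your argument is correct. The paper itself does not prove this corollary: it is one of two results the authors explicitly ``state, without proof'' as ``simple applications'' of the Murnaghan--Nakayama rule. So there is no paper proof to compare against, only the implicit suggestion that an elementary argument exists.

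Your abacus/$m$-quotient approach is clean and rigorous; one small simplification is that you do not actually need to verify that each resulting $\mu$ is a hook. It suffices to observe that the total number of $m$-rim-hooks in any $\lambda\vdash 2m$ is at most~$2$, since the $m$-quotient has total size at most $2$ and hence at most two removable boxes. This already bounds the Murnaghan--Nakayama sum by two terms of absolute value at most~$1$. The extra bijection between hooks of size $m$ and runners is true but not needed for the bound.

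Two minor points: the reference \cite{MR1153249} is Fulton--Harris, not James--Kerber (that is \cite{MR644144}); and you should state explicitly that if no term in the Murnaghan--Nakayama sum survives then $\chi^\lambda(\sigma)=0$ and there is nothing to prove, so that the empty-$m$-core deduction only needs to be made in the nontrivial case. The ``tedious alternative'' you mention at the end---a direct case analysis of attaching an $m$-ribbon to a hook of size $m$---is presumably closer to what the authors had in mind when they called this a simple application.
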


In the following section we will define a new type of partition and
show that these new partitions, along with near-hooks, are the only
partitions for which $\chi^\lambda(\sigma)$ in the above corollary
could be equal to $-2$.

\section{Two-Layer Hooks}
\label{sec:twolayerhooks}

Assume $\lambda=[\lambda_1,\ldots,\lambda_k]$ is a partition of $n$
such that $k\geq 3$, $\lambda_2+\hat{\lambda}_2\geq 5$, $\lambda_3\leq
2$ and $\lambda_1-\lambda_2=\hat{\lambda}_1-\hat{\lambda}_2>0$. Then
we say $\lambda$ is a {\it two-layer hook}.  In fact, a two-layer hook
is a partition whose Young diagram is obtained by ``appropriately
gluing'' two hooks of lengths greater than $1$.  See
Figure~\ref{two_layer_hook} for some examples of two-layer hooks.
\begin{figure}[h!]
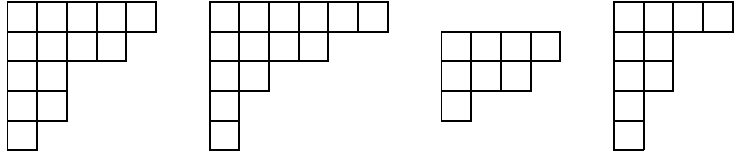

\[\yng(5,4,2,2,1)\quad\quad \yng(6,4,2,1,1)\quad\quad \yng(4,3,1)\quad\quad \yng(4,2,2,1,1) \]
\caption{\small Two-layer hooks}
\label{two_layer_hook}
\end{figure}
Note that if $\lambda \vdash n$ is a two-layer hook, then
$\hat{\lambda}$ is also a two-layer hook and $n$ must be at least
$8$. Note also that a near hook is not a two-layer hook.

\begin{lem}\label{char_of_two_layer_hook}
  Let $\lambda$ be a partition of $n$ and let $\sigma$ be a
  permutation in $\sym(n)$ that is the product of two disjoint
  $n/2$-cycles. If $\chi^\lambda(\sigma)=-2$, then $\lambda$ is either
  a two-layer hook or a symmetric near hook.
\end{lem}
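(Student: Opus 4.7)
The plan is to unfold $\chi^\lambda(\sigma)$ by applying the Murnaghan-Nakayama rule to peel off one of the two $n/2$-cycles. Write $\sigma=\rho_1\rho_2$ as disjoint $n/2$-cycles; Theorem~\ref{nakayama} with $m=n/2$ gives
\[
\chi^\lambda(\sigma)=\sum_{\mu\vdash n/2}(-1)^{r(\mu)}\chi^\mu(\rho_2),
\]
the sum running over partitions $\mu$ obtained from $\lambda$ by removing a length-$n/2$ skew hook. Corollary~\ref{cor:appofMN} kills every $\mu$ except hooks, and on a hook $\mu=[\mu_1,1^{n/2-\mu_1}]$ the value $\chi^\mu(\rho_2)$ is exactly $(-1)^{n/2-\mu_1}$. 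Hence
\[
\chi^\lambda(\sigma)=\sum_{\mu\text{ a hook}}(-1)^{r(\mu)+n/2-\mu_1},
\]
a signed sum of $\pm 1$ contributions, one per length-$n/2$ skew hook of $\lambda$ whose removal leaves a hook. To reach $-2$, at least two such skew hooks must exist in $\lambda$.

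The heart of the argument is then a combinatorial classification: which $\lambda\vdash n$ admit two distinct length-$n/2$ skew-hook removals leaving hooks? I would attack this with beta-numbers. A skew-hook removal of length $h=n/2$ from the beta-set $B=\{\beta_1>\dots>\beta_\ell\}$ corresponds to choosing an index $i$ with $\beta_i\ge n/2$ and $\beta_i-n/2\notin B\setminus\{\beta_i\}$, and the result is a hook exactly when the new beta-set has the stereotyped shape $\{c\}\cup\bigl(\{0,1,\dots,\ell-1\}\setminus\{d\}\bigr)$ with $c\ge\ell$ and $c-d=n/2$. Requiring two such valid choices from the single beta-set of $\lambda$ is very restrictive; translating the resulting templates back to Young diagrams leaves only three families of candidates, namely hooks, near hooks, and two-layer hooks. (Equivalently, one can see this directly from the diagram: a length-$n/2$ skew hook is long, so its removal pins down most of $\lambda$, and two such removals force $\lambda$ to split into an L-shape plus at most one extra layer.)

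To finish, I would eliminate the two families that cannot give $-2$. If $\lambda$ is a hook, an easy direct computation with at most two removals shows $\chi^\lambda(\sigma)\in\{-1,0,+1\}$, so hooks are out. If $\lambda$ is a non-symmetric near hook, checking the parity of $r(\mu)+n/2-\mu_1$ on the two admissible removals shows the contributions cancel (or both hit $+1$), so $-2$ is unreachable. The remaining possibilities are precisely the two-layer hooks and the symmetric near hooks, as claimed.

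The principal obstacle I anticipate is the beta-number bookkeeping in the classification step: verifying that the list of templates giving two hook-removals is genuinely exhaustive, and tracking the sign $(-1)^{r(\mu)+n/2-\mu_1}$ carefully enough to separate the configurations contributing $-2$ from those contributing $0$ or $+2$. Because the lemma only asserts the implication $\chi^\lambda(\sigma)=-2\Longrightarrow\lambda$ is a TLH or symmetric near hook, one does \emph{not} need to characterize which TLHs actually realize $-2$, which makes the parity analysis noticeably less painful than a full if-and-only-if would demand.
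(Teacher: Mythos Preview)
Your overall strategy matches the paper's: apply Murnaghan--Nakayama, reduce to classifying those $\lambda$ admitting two length-$n/2$ rim-hook removals each leaving a hook, then rule out by sign the candidates that cannot produce $-2$. The paper carries out the classification by direct Young-diagram geometry (tracking $\lambda_3$, the differences $s=\lambda_1-\lambda_2$ and $t=\hat\lambda_1-\hat\lambda_2$, and so on) rather than via beta-numbers, but that difference is cosmetic.

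The real gap is in your enumeration. The list ``hooks, near hooks, two-layer hooks'' is not exhaustive for shapes with two such removals. Two further families slip through:
\begin{itemize}
\item the two-row rectangle $\lambda=[n/2,\,n/2]$;
\item the shapes $\lambda=[\lambda_1,\lambda_1,2,\dots,2]$ with $k\ge 3$ parts, i.e.\ the case $\lambda_1-\lambda_2=\hat\lambda_1-\hat\lambda_2=0$, which is explicitly \emph{excluded} from the definition of a two-layer hook.
\end{itemize}
Both admit two length-$n/2$ rim-hook removals leaving hooks, and in both cases each signed contribution is $+1$, giving $\chi^\lambda(\sigma)=+2$ rather than $-2$. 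The paper isolates precisely these two families and disposes of them by this direct computation. Your elimination step, as written, only treats hooks and non-symmetric near hooks, so the $s=t=0$ shapes would survive your argument unaddressed. (Incidentally, hooks and non-symmetric near hooks have at most one length-$n/2$ rim hook whose removal leaves a hook, so they never reach $\pm 2$ in the first place; the parity check you propose for them is superfluous.)
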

\begin{proof}
  According to the Murnaghan-Nakayama Rule and
  Corollary~\ref{cor:appofMN}, $\lambda$ should have two
  skew-hooks of length $n/2$ and deleting each of them should leave a
  hook of length $n/2$. If we denote $\lambda =
  [\lambda_1,\ldots,\lambda_k]$, then this obviously implies that
  $k>1$.

  If $k=2$, then $\lambda$ must be the
  partition $[\frac{n}{2},\frac{n}{2}]$ (since if $\lambda=[\lambda_1,\lambda_2]$, where
  $\lambda_1>\lambda_2$, then $\lambda$ will not have two skew-hooks of
  length $n/2$); in this case we can calculate the character value at
  $\sigma$ to be $2$.  Thus $k\geq 3$.  

  If $\lambda_3>2$, then the partition $\lambda'$ obtained from
  $\lambda$ by deleting any skew-hook will have $\lambda'_2\geq 2$
  which implies that $\lambda'$ is not a hook. Thus $\lambda_3\leq 2$.

  Let $\lambda_1-\lambda_2=s$ and
  $\hat{\lambda}_1-\hat{\lambda}_2=t$. Assume $\mu$ and $\nu$ are the
  two skew hooks of $\lambda$ of length $n/2$. Since they have length
  $n/2$, we may assume that $\mu$ contains the last box of the first
  row and $\nu$ contains the last box of the first column. The lengths
  of $\mu$ and $\nu$ being both equal to $n/2$ implies that
  \[
  (s+1) +(\lambda_2-1) +(\hat{\lambda}_2-1)-1=
  (t+1)+(\hat{\lambda}_2-1)+(\lambda_2-1)-1,
  \]
  which yields $s=t$.

  If $s=t=0$ then $\lambda=[\lambda_1,\lambda_1, 2, \dots 2]$. If we
  denote the number of rows in $\lambda$ by $k$, then according to the
  Murnaghan-Nakayama Rule
\begin{align*}
\chi^\lambda(\sigma) &= (-1)^{r(\mu)} (-1)^{r(\lambda\setminus \mu)}\,+\, (-1)^{r(\nu)} (-1)^{r(\lambda\setminus \nu)}\\
&= (-1)^{k} (-1)^{k}\,+\, (-1)^{k-1} (-1)^{k-1}\\
&=2. 
\end{align*}

Finally, note that if $\lambda_2+\hat{\lambda}_2<5$, then either
$\lambda_2+\hat{\lambda}_2=2$ or $4$. In the former case, $\lambda$ is
a hook and obviously it cannot have two skew-hooks of length $n/2$. In
the latter case, $\lambda$ must be a near hook. If it is not symmetric
then it cannot have two skew-hooks.

These imply that if $\lambda$ is neither symmetric near hook nor a two layer
hook, then $\chi^\lambda(\sigma)\neq -2$; this completes the proof.
\end{proof}

The following lemma provides a lower bound on the dimension of a symmetric near hook.
\begin{lem}\label{dimensions_of_near_hooks}
If a symmetric partition $\lambda$ of $n\geq 8$ is a near hook, then  
 $\chi^\lambda(1)> 2n-2$.
\end{lem}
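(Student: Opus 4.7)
The plan is to first identify all symmetric near hooks of size $n \geq 8$, and then apply the hook length formula (Lemma~\ref{hl_formula}) to see that the dimension grows much faster than $2n-2$.

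First, I would determine the shape of any such $\lambda$. A near hook has the form $\lambda = [\lambda_1, 2, 1^{n - \lambda_1 - 2}]$ with $\lambda_1 \geq 2$, and a direct look at the Young diagram gives $\hat{\lambda} = [n - \lambda_1, 2, 1^{\lambda_1 - 2}]$. Thus $\lambda = \hat{\lambda}$ forces $\lambda_1 = n/2$; in particular $n$ must be even (so the lemma is vacuous when $n$ is odd). Writing $m = n/2 \geq 4$, the only symmetric near hook of size $n$ is $\lambda = [m, 2, 1^{m-2}]$.

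Next I would enumerate the hook lengths of this diagram. The corner $(1,1)$ has hook length $n-1$; both $(1,2)$ and $(2,1)$ have hook length $m$; the cell $(2,2)$ has hook length $1$; and the remaining cells along the top row and left column each contribute a factor of $(m-2)!$ (one for each arm). Hence
\[
\hl(\lambda) = (n-1)\cdot m^2 \cdot \bigl((m-2)!\bigr)^2,
\]
so by Lemma~\ref{hl_formula},
\[
\chi^\lambda(1) = \frac{n!}{\hl(\lambda)} = \frac{2(m-1)^2}{m}\binom{2m-2}{m-1}.
\]

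Finally, the desired bound $\chi^\lambda(1) > 2n-2 = 4m-2$ reduces to $(m-1)^2\binom{2m-2}{m-1} > m(2m-1)$. For the base case $m=4$ this reads $180 > 28$, and a one-line induction using $\binom{2m}{m} = \tfrac{2(2m-1)}{m}\binom{2m-2}{m-1}$ extends it to all $m \geq 4$. There is no real obstacle: the only mildly subtle point is the initial case analysis which collapses the class of symmetric near hooks to a single family, after which the inequality is painless because the right-hand side is linear in $n$ while $\chi^\lambda(1)$ grows exponentially.
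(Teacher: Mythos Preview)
Your proposal is correct and follows essentially the same approach as the paper: both identify the unique symmetric near hook as $\lambda=[n/2,2,1^{n/2-2}]$, compute the identical hook-length product $\hl(\lambda)=(n-1)(n/2)^2\bigl((n/2-2)!\bigr)^2$, and finish via Lemma~\ref{hl_formula}. The only cosmetic difference is that the paper bounds $\hl(\lambda)<n(n-2)!/2$ directly (using $\bigl((n/2-2)!\bigr)^2\le (n-4)!$), while you rewrite $\chi^\lambda(1)$ as $\tfrac{2(m-1)^2}{m}\binom{2m-2}{m-1}$ before comparing to $2n-2$.
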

\begin{proof}
  Since $\lambda$ is a symmetric near hook
  we know that $\lambda=[n/2,2,1^{\frac{n}{2}-2}]$ and we can calculate the hook
  lengths directly
\begin{align*}
\hl(\lambda)&=(n-1)\left(\frac{n}{2}\,\right)^2\left[\left(\frac{n}{2}-2\right)!\right]^2\\[.2cm]
&\leq (n-1)\,\frac{n^2}{4}\,(n-4)!=\frac{n(n-1)}{2(n-2)(n-3)}\,\,\frac{n(n-2)!}{2}\\[.2cm]
&<\frac{n(n-2)!}{2}
\end{align*}
since $n\geq 8$. Putting this bound into the hook-length formula
(Lemma~\ref{hl_formula}) gives the lemma.
\end{proof}

Next we prove that the same lower bound holds for the dimension of a two-layer hook.
\begin{lem}\label{dimensions_of_2_layer_hooks}
If a partition $\lambda$ is a two-layer hook, then $\chi^\lambda(1)> 2n-2$.
\end{lem}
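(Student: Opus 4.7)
The plan is to mimic the proof of Lemma~\ref{dimensions_of_near_hooks}: compute $\hl(\lambda)$ from the hook-length formula and bound it from above.

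Set $a=\lambda_2$, $b=\hat{\lambda}_2$ and $s=\lambda_1-\lambda_2=\hat{\lambda}_1-\hat{\lambda}_2$. The definition of a two-layer hook forces $a\geq 2$, $b\geq 2$, $a+b\geq 5$, $s\geq 1$, and the explicit shape $\lambda=[a+s,\,a,\,2^{b-2},\,1^{s}]$; summing row lengths gives $n=2(a+b+s-2)$, so $n$ is even and at least $8$. Write $m=n/2$.

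I then evaluate every hook length: $(1,1)$ gives $m+s+1$; the boxes $(1,2)$ and $(2,1)$ each give $m$; $(2,2)$ gives $m-s-1$; the rest of row $1$ contributes $(a+s-1)!\,s!/(s+1)!$; by symmetry the rest of column $1$ contributes $(b+s-1)!\,s!/(s+1)!$; the rest of row $2$ contributes $(a-2)!$, and the rest of column $2$ contributes $(b-2)!$. Combining, and using the factorization $(m+s+1)(m-s-1)=m^2-(s+1)^2$,
\[
\hl(\lambda)=\frac{[m^2-(s+1)^2]\,m^2\,(a+s-1)!\,(b+s-1)!\,(a-2)!\,(b-2)!}{(s+1)^2}.
\]
Substituting into the hook-length formula and introducing a multinomial coefficient,
\[
\chi^\lambda(1)=\frac{2\,(s+1)^2}{[m^2-(s+1)^2]\,m^2}\binom{n}{a+s-1,\;b+s-1,\;a-2,\;b-2,\;2},
\]
so the desired inequality $\chi^\lambda(1)>2n-2=2(2m-1)$ reduces to the elementary factorial estimate
\[
(s+1)^2\binom{n}{a+s-1,\;b+s-1,\;a-2,\;b-2,\;2}>(2m-1)[m^2-(s+1)^2]\,m^2.
\]

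The main obstacle is organizing this last estimate uniformly. The bound has enormous slack generically (for instance $\chi^{[4,3,1]}(1)=70$ versus $2\cdot 8-2=14$), so any crude lower bound on the multinomial coefficient suffices whenever $a,b\geq 3$ and $s\geq 2$. The boundary cases $a=2$, $b=2$, or $s=1$ cause some factorials in the multinomial to degenerate, and there a sharper estimate is needed; however each such case involves only one of the three parameters and can be dispatched by a short separate argument, or, when all parameters are small, by direct verification from the closed-form expression for $\hl(\lambda)$ above.
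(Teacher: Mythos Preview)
Your closed-form computation of $\hl(\lambda)$ is correct, and the reduction to the displayed multinomial inequality is valid. This is a genuinely different strategy from the paper's: the paper argues by induction on $n$, removing the last box from each of the first two rows to pass from $\lambda$ to a partition $\tilde{\lambda}$ of $n-2$ that is again a two-layer hook (or, when $\lambda_2=3$, a symmetric near hook handled by Lemma~\ref{dimensions_of_near_hooks}), and then bounding the four ratio factors that appear in $\hl(\lambda)/\hl(\tilde{\lambda})$. Your direct parametrization by $(a,b,s)$ avoids induction and makes the structure explicit, which is attractive.

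However, the proposal stops short of a proof. After writing down
\[
(s+1)^2\binom{n}{a+s-1,\;b+s-1,\;a-2,\;b-2,\;2}>(2m-1)\,[m^2-(s+1)^2]\,m^2,
\]
you do not establish it. The sentence ``any crude lower bound on the multinomial coefficient suffices whenever $a,b\ge 3$ and $s\ge 2$'' is an assertion, not an argument, and the promised treatment of the boundary cases $a=2$, $b=2$, or $s=1$ is never carried out. These boundaries are exactly where the slack is smallest: when $\min(a,b)=2$ one multinomial part collapses to $0!$ and you are looking at the family $\lambda=[s+2,2^{\,b-1},1^{s}]$ (or its transpose); when $s=1$ the helpful factor $(s+1)^2$ on the left is only $4$. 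You must either supply a single uniform estimate covering all admissible $(a,b,s)$ with $a+b\ge 5$ and $s\ge 1$, or actually execute the case split you describe. Until that is done the argument is incomplete. The paper's inductive proof, by contrast, reduces every instance to the base $n=8$ and to Lemma~\ref{dimensions_of_near_hooks}, leaving no residual casework to the reader.
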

\begin{proof}
  Let $\lambda=[\lambda_1,\lambda_2,\ldots,\lambda_k]$. According to
  the hook-length formula, it suffices to show that
  $\hl(\lambda)<n(n-2)!/2$.  We proceed by induction on $n$. It is easy
  to see the lemma is true for $n=8$. Let $n\geq 10$ and without loss
  of generality assume that $\lambda_1>\hat{\lambda}_1$. This implies
  $\lambda_2\geq 3$. We compute
\begin{align*}
\hl(\lambda)=&(\lambda_1+\hat{\lambda}_1-1)(\lambda_1+\hat{\lambda}_2-2) (\lambda_2+\hat{\lambda}_1-2) (\lambda_2+\hat{\lambda}_2-3)\\ &\cdot\frac{(\lambda_1-1)!}{s+1}\frac{(\hat{\lambda}_1-1)!}{s+1}(\lambda_2-2)! (\hat{\lambda}_2-2)!
\end{align*}
where $s$ is as in Lemma~\ref{char_of_two_layer_hook}. One can re-write this as
\begin{align}
\hl(\lambda)
=& \frac{\lambda_1+\hat{\lambda}_1\!-\!1}{\lambda_1+\hat{\lambda}_1\!-\!2}\,\cdot\,\frac{\lambda_1+\hat{\lambda}_2\!-\!2}{\lambda_1+\hat{\lambda}_2\!-\!3} \,\cdot\,\frac{\lambda_2+\hat{\lambda}_1\!-\!2}{\lambda_2+\hat{\lambda}_1\!-\!3}\,\cdot\, \frac{\lambda_2+\hat{\lambda}_2\!-\!3}{\lambda_2+\hat{\lambda}_2\!-\!4} 
 \cdot(\lambda_1\!-\!1)(\lambda_2\!-\!2) \nonumber \\
& \left[\vphantom{\frac12}(\lambda_1+\hat{\lambda}_1-2)(\lambda_1+\hat{\lambda}_2-3)(\lambda_2+\hat{\lambda}_1-3)(\lambda_2+\hat{\lambda}_2-4)\right.\nonumber \\
& \quad \left.\cdot   \frac{(\lambda_1-2)!}{s+1}\frac{(\hat{\lambda}_1-1)!}{s+1}(\lambda_2-3)! (\hat{\lambda}_2-2)!\right]\nonumber \\
=& \frac{\lambda_1+\hat{\lambda}_1-1}{\lambda_1+\hat{\lambda}_1-2}\,\cdot\,\frac{\lambda_1+\hat{\lambda}_2-2}{\lambda_1+\hat{\lambda}_2-3} \,\cdot\,\frac{\lambda_2+\hat{\lambda}_1-2}{\lambda_2+\hat{\lambda}_1-3}\,\cdot\, \frac{\lambda_2+\hat{\lambda}_2-3}{\lambda_2+\hat{\lambda}_2-4} \nonumber \\ 
& \cdot(\lambda_1-1)(\lambda_2-2)  \hl(\tilde{\lambda}),\label{tilde_lambda}
\end{align}
where
$\tilde{\lambda}=[\lambda_1-1,\lambda_2-1,\lambda_3,\ldots,\lambda_k]$
is the partition whose Young diagram is obtained from that of
$\lambda$ by removing the last boxes of the first and the second rows.
We can simplify (\ref{tilde_lambda}) as
\begin{align*}
\hl(\lambda)=& \left(1+\frac{1}{\lambda_1+\hat{\lambda}_1-2}\right)\,\left(1+\frac{1}{\lambda_1+\hat{\lambda}_2-3} \right) \,\left(1+\frac{1}{\lambda_2+\hat{\lambda}_1-3}\right)\\
 &  \left(1+\frac{1}{\lambda_2+\hat{\lambda}_2-4}\right)(\lambda_1-1)(\lambda_2-2)\cdot \hl(\tilde{\lambda}).
\end{align*}
We now observe the following facts:
\begin{enumerate}
\item $\lambda_1+\hat{\lambda}_1-2> n/2$; thus
\[
1+\frac{1}{\lambda_1+\hat{\lambda}_1-2}\,<\,\frac{n+2}{n}.
\]
\item By Lemma~\ref{char_of_two_layer_hook} and the definition of a two-layer hook, $\lambda_1-\lambda_2=\hat{\lambda}_1-\hat{\lambda}_2$; hence $\lambda_1+\hat{\lambda}_2=\lambda_2+\hat{\lambda}_1$. On the other hand
\[
\lambda_1+\hat{\lambda}_2+\lambda_2+\hat{\lambda}_1-5=n-1,
\]
hence
\[
1+\frac{1}{\lambda_1+\hat{\lambda}_2-3}=\frac{n}{n-2},
\quad
1+\frac{1}{\lambda_2+\hat{\lambda}_1-3}=\frac{n}{n-2}.
\]
\item Since $\lambda_2+\hat{\lambda}_2\geq 5$, we have
\[
1+\frac{1}{\lambda_2+\hat{\lambda}_2-4}\,<\,2.
\]
\item Since $\lambda_1+\lambda_2\leq n-1$, we have
\[
(\lambda_1-1)(\lambda_2-2)\leq \frac{(n-4)^2}{4}.
\]
\end{enumerate}

The partition $\tilde{\lambda}$ is either a near hook or a two-layer
hook. In the first case, because $\lambda$ is a two-layer hook, we
have
\[
\tilde{\lambda}_1-2=\tilde{\lambda}_1-\tilde{\lambda}_2=\lambda_1-\lambda_2=\hat{\lambda}_1-\hat{\lambda}_2=\hat{\lambda}_1-2=\widehat{\tilde{\lambda}}_1-2,
\]
that is, the sizes of the first row and the first column of
$\tilde{\lambda}$ are equal which implies that $\tilde{\lambda}$ is
symmetric and, thus, according to
Lemma~\ref{dimensions_of_near_hooks},
\[
\hl(\tilde{\lambda})<\frac{(n-2)(n-4)!}{2}.
\]
If $\tilde{\lambda}$ is a two layer hook, then the same bound holds by
the induction hypothesis. Therefore
\begin{align*}
\hl(\lambda)\,&<\, 2\,\,\frac{n+2}{n}\, \frac{n}{n-2}\, \frac{n}{n-2}\, \frac{(n-4)^2}{4}\, \frac{(n-2)(n-4)!}{2}\\[.3cm]
&=\frac{n(n+2)(n-4)^2(n-4)!}{4(n-2)}\\[.3cm]
&=\frac{(n+2)(n-4)^2}{2(n-2)^2(n-3)}\,\,\frac{n(n-2)! }{2}\\[.3cm]
& <\,\frac{n(n-2)! }{2}. \qedhere
\end{align*}
\end{proof}

\section{The Standard Module}\label{standard_module}

The representation of $\sym(n)$ corresponding to $[n]$ is called the {\it trivial}
representation, the character for this representation is equal to $1$
for every permutation. If $\lambda= [n-1,1]$, then the irreducible
representation $S^\lambda$ of the $\sym(n)$ is called the {\it
  standard representation}. For $n\geq 5$, $\lambda=[n-1,1]$ is not
symmetric; hence the restriction $V$ of $S^\lambda$ to $G_n$ is also
irreducible. We also call this representation the {\it standard
  representation} of $G_n$ and $V$ is the standard module of
$G_n$. The value of the character of the standard representation on a
permutation $\sigma$ is the number of elements of $\{1,\dots,n\}$ fixed by
$\sigma$ minus $1$.

In this section we prove that the characteristic vector of any maximum
intersecting subset of $G_n$ is in the direct sum of the trivial
module and the standard module of $G$. To do this, we will show that
the clique-coclique bound holds with equality and, moreover, for each
$\lambda$ which is neither $[n]$ nor $[n-1,1]$, there is a clique $C$
of maximum size with $E_{\lambda}v_C \neq 0$. From this, using
Corollary~\ref{clique_vs_coclique}, we conclude that $E_{\lambda}v_S =
0$ for any maximum independent set, unless $E_{\lambda}$ is the
projection to either the trivial module or the standard module.
According to Corollary~\ref{at_most_one_non-zero}, it is sufficient to
show that for each irreducible representation $\chi$ of $G_n$ there is
a maximum clique $C$ in $\Gamma_{G_n}$ with $\chi(C) \neq 0$.  This is
very similar to what was done in \cite[Section 5]{Meagher_Godsil}.  To
do this, we will consider two cases, first when $n$ is odd and
second when it is even.

\subsection{Case 1: $n$ is odd:} 
\label{subsec:odd}

Assume $n\geq 5$ to be odd. Theorem 1.1 of \cite{AlspachGSV03} proves
that there is a decomposition of the arcs of the complete digraph
$K_n^\ast$ on $n$ vertices to $n-1$ directed cycles of length
$n$. Each of these cycles corresponds to an $n$-cycle in $G_n$. Since
no two such decompositions share an arc in $K_n^\ast$, no two of the
corresponding permutations intersect.  Let $C$ be the set of these
permutations together with the identity element of $G_n$. Then $C$ is
a clique in $\Gamma_{G_n}$ of size $n$.  The set of all $n$-cycles
from $\sym(n)$ form a pair of split conjugacy classes $c'_0$ and
$c''_0$ in $G_n$. Thus all the non-identity elements of $C$ lie in
$c'_0\cup c_0''$.

\begin{lem}\label{n_odd} Let $n\geq 5$ be odd. Then for any
  irreducible character $\chi$ of $G_n$, other than the standard
  character, we have $\chi(C)\neq 0$, where $C$ is the set defined above.
\end{lem}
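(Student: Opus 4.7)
The plan is to verify $\chi(C) \neq 0$ separately for each non-standard irreducible character of $G_n$, using the classification from Theorem~\ref{reps_of_alt}: the $\chi_\lambda$ for non-symmetric $\lambda$, and the pair $\chi'_\lambda, \chi''_\lambda$ for symmetric $\lambda$. Writing $C = \{1\} \cup C^\circ$ where $C^\circ$ consists of $n-1$ $n$-cycles, all lying in the split pair $c'_0 \cup c''_0$ of $n$-cycle conjugacy classes, suppose $a$ of them lie in $c'_0$ and $b = n-1-a$ in $c''_0$. The only property of this decomposition that I shall use is $a + b = n - 1$.

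For non-symmetric $\lambda$, Theorem~\ref{char_values_of_alt}(a) gives the common value $\chi^\lambda(\bar\sigma)$ on every element of $C^\circ$, so $\chi_\lambda(C) = \chi^\lambda(1) + (n-1)\chi^\lambda(\bar\sigma)$. By Corollary~\ref{cor:appofMN}, $\chi^\lambda(\bar\sigma) = 0$ unless $\lambda = [\lambda_1, 1^{n-\lambda_1}]$ is a hook, so in the non-hook case $\chi_\lambda(C) = \chi^\lambda(1) > 0$. In the hook case the sum equals $\binom{n-1}{n-\lambda_1} + (n-1)(-1)^{n-\lambda_1}$, which can vanish only when $\binom{n-1}{n-\lambda_1} = n-1$, and for $n \geq 5$ this forces $n-\lambda_1 \in \{1, n-2\}$, identifying $\lambda$ with the standard partition up to conjugation. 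For a symmetric partition $\lambda$ other than the symmetric hook $\lambda_0 = [(n+1)/2, 1^{(n-1)/2}]$, the partition $\lambda$ is not a hook, so $\chi^\lambda(\bar\sigma) = 0$ again, and Theorem~\ref{char_values_of_alt}(b)(i) yields $\chi'_\lambda(C) = \chi''_\lambda(C) = \tfrac{1}{2}\chi^\lambda(1) > 0$.

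The main obstacle is $\lambda = \lambda_0$, because $c'_0 \cup c''_0$ corresponds to $\lambda_0$ and Theorem~\ref{char_values_of_alt}(b)(ii) produces the special values $x, y = \tfrac{1}{2}[(-1)^m \pm \sqrt{(-1)^m n}]$ with $m = (n-1)/2$, giving
\[
\chi'_{\lambda_0}(C) = \tfrac{1}{2}\chi^{\lambda_0}(1) + \tfrac{n-1}{2}(-1)^m + \tfrac{a-b}{2}(x - y),
\]
and analogously for $\chi''_{\lambda_0}(C)$; the hook-length formula gives $\chi^{\lambda_0}(1) = \binom{n-1}{(n-1)/2}$. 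I then split on $n \bmod 4$. If $n \equiv 3 \pmod 4$, then $x - y$ is purely imaginary, so the real part of $\chi'_{\lambda_0}(C)$ equals $\tfrac{1}{2}[\binom{n-1}{(n-1)/2} - (n-1)]$, which is positive since $\binom{n-1}{(n-1)/2} \geq \binom{n-1}{2} > n - 1$ for $n \geq 5$. If $n \equiv 1 \pmod 4$, then the expression is real and equals $\tfrac{1}{2}[\binom{n-1}{(n-1)/2} + (n-1) \pm (a-b)\sqrt n]$; since $|a - b| \leq n - 1$, it suffices to check $\binom{n-1}{(n-1)/2} > (n-1)(\sqrt n - 1)$, which follows from the bound $\binom{n-1}{(n-1)/2} \geq (n-1)(n-2)/2$ combined with the elementary inequality $(n-2)/2 > \sqrt n - 1$ (equivalent to $n > 4$). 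This argument is uniform in whether or not $\sqrt n$ is rational.
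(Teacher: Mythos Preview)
Your proof is correct and follows essentially the same route as the paper: the same case split via Theorem~\ref{reps_of_alt} into non-symmetric $\lambda$, symmetric non-hook $\lambda$, and the symmetric hook $\lambda_0=[(n+1)/2,1^{(n-1)/2}]$, with the last case handled by separating $n\equiv 1$ and $n\equiv 3\pmod 4$. Your bookkeeping is slightly cleaner---writing $\chi^{\lambda_0}(1)=\binom{n-1}{(n-1)/2}$ and using the elementary bound $\binom{n-1}{(n-1)/2}\ge\binom{n-1}{2}$ avoids the double-factorial manipulations and the separate small-$n$ checks in the paper---but the argument is the same in substance.
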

\begin{proof}
  Theorem~\ref{char_values_of_alt} gives all the irreducible
  representations of $G_n$. First consider the case where $\chi$ is
  the character of the restriction of the representation
  $S^{\lambda}$, where $\lambda$ is not symmetric, to $G_n$. Let
  $\chi^\lambda$ be the character of $S^{\lambda}$ then $\chi =
  \chi_\lambda$. According to Theorem~\ref{char_values_of_alt},
  $\chi_\lambda$ has the same values on $c'_0$ and $c_0''$ and this
  value is equal to the value of $\chi^\lambda$ on $c'_0 \cup
  c''_0$. We compute
\[
\chi_\lambda(C)=\sum_{x\in C}\chi_\lambda(x)=\chi^\lambda(1)+(n-1)\chi^\lambda(\sigma),
\]
where $\sigma$ is a cyclic permutation of length $n$. Using the
corollary of the Murnaghan-Nakayama Rule
(Corollary~\ref{cor:appofMN}), we have $\chi^\lambda(\sigma)\in
\{0,\pm 1\}$. Therefore, if $\chi_\lambda(C)=0$, since
$\chi_\lambda(1) >0$, it must be that $\chi^\lambda(\sigma)= -1$ and
then $\chi^\lambda(1)=n-1$. The representations corresponding to the
partition $[n-1,1]$ and its transpose, $[2,1,\ldots,1]$, are the only
representations of $\sym(n)$ of dimension $n-1$ and according to
Theorem~\ref{reps_of_alt}, their restrictions to $G_n$ are both
isomorphic to the standard representation of $G_n$.

Next assume that $\chi$ is the character of one of the two irreducible
representations $W'$ or $W''$, where $W = W'\oplus W''$ is the
restriction of $S^\lambda$ to $G_n$; in this case $\lambda$ must be
symmetric.  Thus, using the notation of
Theorem~\ref{char_values_of_alt}, $\chi = \chi_\lambda'$ (the case
when $\chi = \chi_\lambda''$ is identical, so we omit it).  If $\lambda$
is not the hook $[(n+1)/2,1,\ldots,1]$, then according to
Theorem~\ref{char_values_of_alt}, we have
\[
\chi_\lambda'(C)=\sum_{x\in C}\chi_\lambda'(x)=\frac{1}{2}\chi^\lambda(1)+(n-1)\frac{1}{2}\chi^\lambda(\sigma).
\]
Thus, as in the previous case, if $\chi_\lambda'(C)=0$, then we must have $\chi^\lambda(1)=n-1$ which is a contradiction.

The final case that we need to consider is when $\chi$ is the
character of one of the two irreducible representations whose sum is
the representation formed by restricting $S^\lambda$ to $G_n$ where
$\lambda=[(n+1)/2,1,\ldots,1]$. Again we assume that $\chi =
\chi_\lambda'$ (since the case for $\chi = \chi_\lambda''$ is
identical) and using Theorem~\ref{char_values_of_alt}, we have
\begin{align*}
\chi_\lambda'(C)&=\sum_{x\in C}\chi_\lambda'(x)\\
&=\chi'_\lambda(1)+\sum_{x\in C\cap c'_0}\chi_\lambda'(x)+ \sum_{x \in C\cap c''_0}\chi_\lambda'(x)\\
&=\frac{1}{2}\chi^\lambda(1)+r'\, \frac{1}{2}\left[(-1)^\frac{n-1}{2}+\sqrt{(-1)^\frac{n-1}{2} \,n}\right] +r'' \,\frac{1}{2}\left[(-1)^\frac{n-1}{2}-\sqrt{(-1)^\frac{n-1}{2}\,n} \right],
\end{align*}
where $r'=|C\cap c'_0|$ and $r''=|C\cap c''_0|$. Note that $r'+r'' =n-1$.  Hence, if $\chi_\lambda'(C)=0$, then we must have
\begin{equation}\label{eq_on_dim_lambda}
-\chi^\lambda(1)=r'\left[(-1)^\frac{n-1}{2}+\sqrt{(-1)^\frac{n-1}{2} \,n}\right]\,+ \,r''\left[(-1)^\frac{n-1}{2}-\sqrt{(-1)^\frac{n-1}{2}\,n} \right].
\end{equation}
Note that
\[
\chi^\lambda(1)=\frac{2^{n-1}(n-2)!!}{(n-1)!!},
\]
where  $a!!=a(a-2)(a-4)\cdots 2$ if $a$ is even positive integer and $a!!=a(a-2)(a-4)\cdots 1$, if $a$ is odd.
Consider the following two cases. If $4\nmid n-1$, then (\ref{eq_on_dim_lambda}) implies that
\begin{equation}\label{not_divisible_by_4}
-\frac{2^{n-2}(n-2)!!}{(n-1)!!}=-(n-1)+\sqrt{-n}(r'-r'').
\end{equation}
It follows, then, that  $r'=r''$ and so
\[\frac{2^{n-1}(n-2)!!}{(n-1)!!}=n-1,\]
since this only holds for $n=3$, this is a contradiction.

On the other hand, if $4\mid n-1$, then (\ref{eq_on_dim_lambda}) implies that
\begin{equation}\label{divisible_by_4}
-\frac{2^{n-1}(n-2)!!}{(n-1)!!}=r'\, (1+\sqrt{n})\,\,+ \,\, r''\, (1-\sqrt{n});
\end{equation}
that  is,
\begin{align}
\frac{2^{n-1}(n-2)!!}{(n-1)!!} &=-(n-r''-1)\, (\sqrt{n}+1)\,\,+ \,\, r''\, (\sqrt{n}-1)  \nonumber \\
             &\leq (n-1)\, (\sqrt{n}-1)\leq n^{\frac{3}{2}}. \label{asymptotic_inequality}
\end{align}
Note that
\[
\frac{2^{n-1}(n-2)!!}{(n-1)!!}=\frac{2^{n-1}}{n}\frac{n!!}{(n-1)!!}>\frac{2^{n-1}}{n};
\]
thus (\ref{asymptotic_inequality}) yields
\[
2^{n-1}< n^\frac{5}{2}.
\]
It is easily seen that this inequality fails for all $n\geq
9$. Finally, note that (\ref{divisible_by_4}) and
(\ref{not_divisible_by_4}) lead us to contradictions if $n=5$ and if
$n=7$, respectively. This completes the proof of the lemma.
\end{proof}

Next we consider when $n$ is even

\subsection{Case 2: $n$ is even}
\label{subsec:even}

In this part, we assume $n\geq 6$ to be even. According to Theorem 1.1
in \cite{AlspachGSV03}, the arcs of the complete digraph $K_n^\ast$
can be decomposed to $n-1$ pairs of vertex-disjoint directed cycles of
length $n/2$. Each of these pairs corresponds to a permutation in
$G_n$ which is a product of two cyclic permutations of length $n/2$.
Let $C$ be the set of these permutations together with the identity
element of $G_n$. Then, similar to the previous part, $C$ is a clique
in $\Gamma_{G_n}$. Note that the non-identity elements of $C$ lie in a
non-split conjugacy class $c$ of $G_n$. Now we prove the equivalent of
Lemma~\ref{n_odd} for even $n$, using this set $C$.

\begin{lem}\label{n_even} Let $n\geq 6$ be even. Then for any
  irreducible character $\chi$ of $G_n$, which is not the standard
  character, we have $\chi(C)\neq 0$, where $C$ is as defined above.
\end{lem}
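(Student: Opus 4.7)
The plan is to mirror the argument used for odd $n$ in Lemma~\ref{n_odd}, exploiting the simplification that all non-identity elements of $C$ lie in a single non-split conjugacy class $c$ of $G_n$. Because $c$ is non-split, Theorem~\ref{char_values_of_alt} shows that on $c$ the character values collapse nicely: for any irreducible $\chi$ of $G_n$ and any $\sigma \in c$,
\[
\chi(C) = \chi(1) + (n-1)\chi(\sigma).
\]
Combining this with Corollary~\ref{cor:appofMNtwo}, which forces $\chi^\lambda(\sigma) \in \{0, \pm 1, \pm 2\}$, will pin $\chi^\lambda(1)$ down to a very short list of candidate values as soon as I assume $\chi(C) = 0$.

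First I would handle the case in which $\lambda$ is not symmetric, so $\chi = \chi_\lambda$ with $\chi(1) = \chi^\lambda(1)$ and $\chi(\sigma) = \chi^\lambda(\sigma)$. The equation $\chi^\lambda(1) = -(n-1)\chi^\lambda(\sigma)$ then forces $\chi^\lambda(1) \in \{n-1,\, 2(n-1)\}$. The value $n-1$ is realized in $\sym(n)$ only by $[n-1,1]$ and its conjugate $[2,1^{n-2}]$, both of which restrict to the standard representation of $G_n$, and so are excluded by hypothesis. The value $2(n-1)$ forces $\chi^\lambda(\sigma) = -2$; by Lemma~\ref{char_of_two_layer_hook}, $\lambda$ must then be a two-layer hook (the symmetric-near-hook alternative is incompatible with $\lambda$ non-symmetric), and Lemma~\ref{dimensions_of_2_layer_hooks} immediately gives the contradiction $\chi^\lambda(1) > 2n-2$.

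Next I would handle the symmetric case, where $\chi$ is one of $\chi'_\lambda, \chi''_\lambda$. Since $c$ is non-split, Theorem~\ref{char_values_of_alt}(b) gives $\chi(1) = \tfrac12\chi^\lambda(1)$ and $\chi(\sigma) = \tfrac12\chi^\lambda(\sigma)$, so $\chi(C)=0$ again reduces to $\chi^\lambda(1) = -(n-1)\chi^\lambda(\sigma)$ with $\chi^\lambda(1)\in\{n-1,\,2(n-1)\}$. The value $n-1$ cannot occur, since for $n \geq 5$ the only partitions with that dimension are $[n-1,1]$ and $[2,1^{n-2}]$, neither symmetric. The value $2(n-1)$ again puts $\lambda$ in the symmetric near hook / two-layer hook dichotomy of Lemma~\ref{char_of_two_layer_hook}, and both families are ruled out by Lemmas~\ref{dimensions_of_near_hooks} and~\ref{dimensions_of_2_layer_hooks} as soon as $n \geq 8$. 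The only leftover case is $n=6$, where the unique symmetric partition is $[3,2,1]$, with dimension $16 > 10 = 2n-2$, which I would verify by a direct hook-length computation.

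The main obstacle here is not the case analysis, which is quite mechanical, but rather ensuring that the candidate dimensions $n-1$ and $2(n-1)$ really are the only ones to worry about: this rests entirely on Corollary~\ref{cor:appofMNtwo} together with the structural result Lemma~\ref{char_of_two_layer_hook}, so the conceptual work has already been done in Section~\ref{sec:twolayerhooks}. The one place to be careful is the small-$n$ boundary, namely $n=6$, which falls outside the hypothesis $n \geq 8$ of Lemma~\ref{dimensions_of_near_hooks} and must be handled by a brief direct check.
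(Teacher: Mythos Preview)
Your proposal is correct and follows essentially the same route as the paper: reduce $\chi(C)=0$ to $\chi^\lambda(1)=-(n-1)\chi^\lambda(\sigma)$ via Theorem~\ref{char_values_of_alt} (using that $c$ is non-split), invoke Corollary~\ref{cor:appofMNtwo}, and eliminate the candidate dimensions $n-1$ and $2(n-1)$ using Lemma~\ref{char_of_two_layer_hook} together with Lemmas~\ref{dimensions_of_near_hooks} and~\ref{dimensions_of_2_layer_hooks}. Your explicit check of $n=6$ via the hook-length computation for $[3,2,1]$ is a nice touch---the paper simply says ``with the same argument as above'' in the symmetric case, implicitly relying on Lemma~\ref{dimensions_of_near_hooks}, whose hypothesis is $n\geq 8$; your direct verification closes that small boundary gap.
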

\begin{proof}

  First consider the case $\chi = \chi_\lambda$ where $\lambda$
  is not symmetric. Using the notation of
  Theorem~\ref{char_values_of_alt}, we have
\[
\chi_\lambda(C)=\sum_{x\in C}\chi_\lambda(x)=\chi^\lambda(1)+(n-1)\chi^\lambda(\sigma),
\]
where $\sigma$ is a product of two disjoint cyclic permutations of
length $n/2$. Now, suppose $\chi_\lambda(C)=0$. Then
\begin{equation}\label{even_nonsymmetric}
-\chi^\lambda(1)=(n-1)\chi^\lambda(\sigma).
\end{equation}

Using the Murnaghan-Nakayama Rule, we have
$\chi^\lambda(\sigma)\in\{0,\pm 1,\pm2\}$ (see
Corollary~\ref{cor:appofMNtwo}). If $\chi^\lambda(\sigma)=0, 1$ or $2$,
then Equation (\ref{even_nonsymmetric}) yields a contradiction with
the fact that $\chi_\lambda(1)$ is strictly positive. Also if
$\chi^\lambda(\sigma)=-1$, then we must have $\chi^\lambda(1)=n-1$
which contradicts the fact that the standard representation and its
conjugate are the only irreducible representations of $\sym(n)$ of
dimension $n-1$. Hence, suppose $\chi^\lambda(\sigma)=-2$.  Then
$\chi^\lambda(1)=2n-2$.  But according to
Lemma~\ref{char_of_two_layer_hook}, $\lambda$ must be a two-layer hook
or a symmetric near hook. Then by Lemma~\ref{dimensions_of_near_hooks}
and Lemma~\ref{dimensions_of_2_layer_hooks}, the dimension of $\chi$
is strictly greater than $2n-2$. 

Next consider the case where $\chi$ is the character of one of the two
irreducible representations in the restriction of the representation
$S^{\lambda}$ to $G_n$, where $\lambda$ is not symmetric; so $\chi =
\chi_\lambda'$ or $\chi_\lambda''$. We will show that
$\chi_\lambda'(C)\neq 0$; the proof that $\chi_\lambda''(C)\neq 0$ is
similar. We have
\[
\chi_\lambda'(C)=\sum_{x\in C}\chi_\lambda'(x)=\frac{1}{2}\chi^\lambda(1)+(n-1)\frac{1}{2}\chi^\lambda(\sigma),
\]
where $\sigma$ is a product of two disjoint $n/2$-cycles. If
$\chi_\lambda(C)=0$, then with the same argument as above, we get a
contradiction.
\end{proof}

We now prove the main theorem of this section.

\begin{prop}\label{max_indy_is_in_standard_module}
  Let $S$ be an intersecting subset of $G_n$ of size $(n-1)!/2$ and
  let $v_S$ be the characteristic vector of $S$. Then the vector
  $v_S-\frac{1}{n}\mathbf{1}$ is in the standard module of $G_n$.
\end{prop}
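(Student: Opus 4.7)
The plan is to combine the clique--coclique machinery (Corollaries~\ref{clique_vs_coclique} and~\ref{at_most_one_non-zero}) with the character computations of Lemmas~\ref{n_odd} and~\ref{n_even} to show that, apart from its trivial-module component, $v_S$ has nonzero projection only onto the standard module. The equality $v_S - \tfrac{1}{n}\mathbf{1}\in V$ then follows by subtracting off the (easily computed) trivial component.

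More concretely, I would first verify that the clique--coclique bound holds with equality for $\Gamma_{G_n}$. In both Subsection~\ref{subsec:odd} (for $n$ odd) and Subsection~\ref{subsec:even} (for $n$ even), a clique $C$ of size $n$ in $\Gamma_{G_n}$ has been constructed. Since $|G_n|=n!/2$ and $|S|=(n-1)!/2$, we have
\[
|C|\,|S| \;=\; n\cdot\frac{(n-1)!}{2} \;=\; \frac{n!}{2} \;=\; |G_n|,
\]
so Theorem~\ref{clique_coclique_baound} is tight. By Corollary~\ref{clique_vs_coclique}, for every nontrivial irreducible character $\chi$ of $G_n$, at most one of the projections $E_\chi v_C$ and $E_\chi v_S$ is nonzero.

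Next, for any irreducible character $\chi$ of $G_n$ other than the trivial and the standard character, Lemmas~\ref{n_odd} and~\ref{n_even} give $\chi(C)\neq 0$. Evaluating (\ref{idempotent}) at the identity coordinate,
\[
(E_\chi v_C)_1 \;=\; \sum_{\sigma\in C}\frac{\chi(1)}{|G_n|}\chi(\sigma) \;=\; \frac{\chi(1)}{|G_n|}\,\chi(C) \;\neq\; 0,
\]
so $E_\chi v_C\neq 0$, and therefore $E_\chi v_S=0$ by Corollary~\ref{at_most_one_non-zero}. Since the idempotents $E_\chi$ sum to the identity, the only surviving components of $v_S$ are those in the trivial module and the standard module $V$.

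Finally, the trivial-module component is immediate: the trivial idempotent is $\tfrac{1}{|G_n|}J$, so
\[
E_{\mathrm{triv}}\, v_S \;=\; \frac{|S|}{|G_n|}\,\mathbf{1} \;=\; \frac{(n-1)!/2}{n!/2}\,\mathbf{1} \;=\; \frac{1}{n}\,\mathbf{1}.
\]
Writing $v_S = E_{\mathrm{triv}}v_S + E_{\mathrm{std}}v_S$ then yields $v_S - \tfrac{1}{n}\mathbf{1} = E_{\mathrm{std}}v_S \in V$, as required. The heart of the argument is really the pair of character-theoretic lemmas that rule out every non-standard non-trivial $\chi$; once those are in hand, the present proposition is a short assembly, with the main bookkeeping being the verification that the chosen cliques achieve the clique--coclique bound.
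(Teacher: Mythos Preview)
Your proof is correct and follows essentially the same approach as the paper: use the size-$n$ cliques from Subsections~\ref{subsec:odd} and~\ref{subsec:even} to show the clique--coclique bound is tight, invoke Lemmas~\ref{n_odd} and~\ref{n_even} with Corollary~\ref{at_most_one_non-zero} to kill every $E_\chi v_S$ except the trivial and standard ones, and then identify the trivial component. The only cosmetic difference is that you compute $E_{\mathrm{triv}}v_S=\tfrac{1}{n}\mathbf{1}$ directly, whereas the paper instead observes that $v_S-\tfrac{1}{n}\mathbf{1}$ is orthogonal to $\mathbf{1}$; these are equivalent.
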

\begin{proof} Let $S_{1,1}$ be the point stabilizer for $1$ in $G_n$;
  so $S_{1,1}$ is an independent set of size $\frac{(n-1)!}{2}$ in $\Gamma_{G_n}$. Then the
  cliques defined in Lemma~\ref{n_odd} and Lemma~\ref{n_even},
  together with $S_{1,1}$ prove that the clique-coclique bound holds with
  equality for $\Gamma_{G_n}$.  Given any irreducible character $\chi$
  of $G_n$, except the standard character and the trivial character, according to
  Lemma~\ref{n_odd} and Lemma~\ref{n_even}, there is a maximum clique
  $C$, such that $\chi(C)\neq 0$. Hence, according to
  Corollary~\ref{at_most_one_non-zero}, we have $E_\chi v_S=0$, for
  any maximum independent set $S$. This implies that if $\chi$ is neither
  the trivial nor the standard character, then $E_\chi
  (v_S-\frac{\mathbf{1}}{n})=0$. It is not hard to see that the vector
  $v_S-\frac{\mathbf{1}}{n}$ is orthogonal to $\mathbf{1}$; hence it
  cannot lie in the trivial module of $G_n$. Therefore, for any
  maximum independent set $S$, the vector $v_S-\frac{1}{n}\mathbf{1}$
  belongs to the standard module of $G_n$.
\end{proof}

In the remainder of this section, we will provide a basis for the standard
module of $G_n$. For any pair $i,j\in \{1,\dots,n\}$, define $S_{i,j}$ to be the
set of all permutations $\pi\in G_n$ such that $\pi(i)=j$. Note that
$S_{i,j}$ are cosets of point stabilizers in $G_n$ under the natural
action of $G_n$ on $\{1,\dots,n\}$ and that they are maximum intersecting sets
in $G_n$. Define $v_{i,j}$ to be the characteristic vector of
$S_{i,j}$, for all $i,j\in \{1,\dots,n\}$.

\begin{lem}\label{basis}
The set
\[
B:=\{v_{i,j}-\frac{1}{n}\mathbf{1}\,|\, i,j\in[n-1]\} 
\]
is a basis for the standard module $V$ of $G_n$.
\end{lem}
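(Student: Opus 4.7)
The plan is to verify three things in turn: first, that every vector of $B$ lies in $V$; second, that $\dim V = (n-1)^2 = |B|$; and third, that $B$ is linearly independent. Together these force $B$ to be a basis of $V$.

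The first is immediate from Proposition~\ref{max_indy_is_in_standard_module}: each $S_{i,j}$ is a coset of the stabilizer of the point $i$ in $G_n$, hence a maximum intersecting set of size $(n-1)!/2$, so $v_{i,j}-\tfrac{1}{n}\mathbf{1}\in V$. The second is a standard fact about the conjugacy class scheme: $E_{\chi_\lambda}$ is a projection, so $\dim V = \mathrm{rank}(E_{\chi_\lambda}) = \mathrm{tr}(E_{\chi_\lambda}) = \chi_\lambda(1)^2 = (n-1)^2$.

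The main work will be the third step. I would assume a dependence $\sum_{i,j\in[n-1]} c_{i,j}\bigl(v_{i,j}-\tfrac{1}{n}\mathbf{1}\bigr)=0$ and evaluate it at $\pi\in G_n$, which yields
\[
\sum_{i\in[n-1],\ \pi(i)\in[n-1]} c_{i,\pi(i)} \;=\; \alpha,
\]
where $\alpha=\tfrac{1}{n}\sum_{i,j}c_{i,j}$ is independent of $\pi$. I would then substitute three families of test permutations in turn: the identity (yielding $\sum_{i\in[n-1]}c_{i,i}=\alpha$); the 3-cycles $(a,b,n)$ with $a\ne b$ in $[n-1]$ (yielding $c_{a,b}=c_{a,a}+c_{b,b}$ after subtracting the identity relation); and the 3-cycles $(a,b,c)$ with $a,b,c$ distinct in $[n-1]$ (yielding $c_{a,a}+c_{b,b}+c_{c,c}=0$ after applying the previous identity). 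Since $n\ge 5$, the set $[n-1]$ has at least four elements, so swapping one entry of the triple in the final relation forces all $c_{i,i}$ to coincide; then $3c_{i,i}=0$ gives $c_{i,i}=0$, and hence $c_{a,b}=0$ for all $a,b$.

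I expect the linear-independence check to be the main obstacle. The key trick is to use 3-cycles involving the omitted index $n$: each such 3-cycle picks out exactly one off-diagonal coefficient $c_{a,b}$ together with a controlled diagonal piece, which cleanly separates the off-diagonal unknowns from the diagonal ones and converts the problem into a small system of scalar relations. A representation-theoretic alternative would be to argue that $\mathrm{span}(B)$ is a nonzero $G_n\times G_n$-submodule of the irreducible isotypic component $V_\chi\otimes V_\chi^*$ and must therefore equal $V$; but the direct calculation above keeps the argument elementary.
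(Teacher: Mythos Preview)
Your proof is correct, and it takes a genuinely different route from the paper. The paper also establishes $B\subset V$ via Proposition~\ref{max_indy_is_in_standard_module} and notes $\dim V=(n-1)^2=|B|$, but for linear independence it forms the matrix $H$ with columns $v_{i,j}$ ($i,j\in[n-1]$), computes the Gram matrix
\[
H^TH=\frac{(n-1)!}{2}\,I_{(n-1)^2}+\frac{(n-2)!}{2}\,A(K_{n-1})\otimes A(K_{n-1}),
\]
and checks positive definiteness via the eigenvalues of $A(K_{n-1})\otimes A(K_{n-1})$. Your argument instead evaluates a putative dependence at carefully chosen $3$-cycles in $G_n$, first those touching the omitted index $n$ to isolate each off-diagonal $c_{a,b}$, then those inside $[n-1]$ to force all diagonal $c_{i,i}$ to vanish. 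This is more elementary and exploits the concrete structure of $\alt(n)$ (the abundance of $3$-cycles), avoiding any spectral computation; the paper's Gram-matrix approach is more uniform and would transplant more readily to settings where convenient test permutations are not available. Both arguments require $n\ge 5$: yours to have four indices in $[n-1]$, the paper's implicitly via the standing hypothesis and the positivity estimate.
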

\begin{proof}
  According to Proposition~\ref{max_indy_is_in_standard_module}, we
  have $B\subset V$ and since the dimension of $V$ is equal to
  $|B|=(n-1)^2$, it suffices to show that $B$ is linearly
  independent. Note, also, that since $\mathbf{1}$ is not in the span
  of $v_{i,j}$ for $i,j\in[n-1]$, it is enough to prove that the set
  $\{v_{i,j}\,|\, i,j\in[n-1]\}$ is independent.  

  Define a matrix $H$ to have the vectors $v_{i,j}$, with $i,j\in
  [n-1]$, as its columns.  Then the rows of $H$ are indexed by the
  elements of $G_n$ and the columns are indexed by the ordered pairs
  $(i,j)$, where $i,j\in [n-1]$; we will also assume that the ordered
  pairs are listed in lexicographic order.  It is easy to see that
\[
H^TH=\frac{(n-1)!}{2}\,I_{(n-1)^2}\,+\, \frac{(n-2)!}{2}\left( A(K_{n-1})\otimes A(K_{n-1})\right),
\]
where $I_{(n-1)^2}$ is the identity matrix of size $(n-1)^2$ and
$A(K_{n-1})$ is the adjacency matrix of the complete graph
$K_{n-1}$. The distinct eigenvalues of $A(K_{n-1})$ are $-1$ and
$n-2$; thus the eigenvalues of $A(K_{n-1})\otimes A(K_{n-1})$ are
$-(n-2), 1, (n-2)^2$. This implies that the least eigenvalue of $H^TH$
is
\[
\frac{(n-1)!}{2}-\frac{(n-2)(n-2)!}{2}>0.
\]
This proves that $H^TH$ is non-singular and hence full
rank. This, in turn, proves that $\{v_{i,j}\,|\, i,j\in[n-1]\}$ is
linearly independent.
\end{proof}

\section{Proof of The Main Theorem}\label{main_proof}

Define the $|G_n|\times n^2$ matrix $A$ to be the matrix whose columns
are the characteristic vectors $v_{i,j}$ of the sets $S_{i,j}$, for
all $i,j\in \{1,\dots,n\}$. Then since $A$ has constant row-sums, the vector
$\mathbf{1}$ is in the column space of $A$; thus in the light of
Lemma~\ref{basis}, we observe the following.
\begin{lem}\label{col_space_A}
The characteristic vector of any maximum intersecting subset of $G_n$ lies in the column space of $A$.
\end{lem}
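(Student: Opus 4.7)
The plan is to assemble this lemma directly from the two main ingredients proved earlier in the section, namely Proposition~\ref{max_indy_is_in_standard_module} and Lemma~\ref{basis}, together with the row-sum observation that is already flagged in the sentence preceding the lemma statement.

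First I would fix a maximum intersecting subset $S \subseteq G_n$ and apply Proposition~\ref{max_indy_is_in_standard_module} to put the vector $v_S - \tfrac{1}{n}\mathbf{1}$ inside the standard module $V$ of $G_n$. Next I would invoke Lemma~\ref{basis}, which tells us that $B = \{v_{i,j} - \tfrac{1}{n}\mathbf{1} : i,j \in [n-1]\}$ is a basis for $V$, so there exist scalars $\alpha_{i,j}$ with
\[
v_S - \frac{1}{n}\mathbf{1} \;=\; \sum_{i,j \in [n-1]} \alpha_{i,j}\left(v_{i,j} - \frac{1}{n}\mathbf{1}\right).
\]
Rearranging gives $v_S$ as a linear combination of the columns $v_{i,j}$ with $i,j \in [n-1]$ plus a scalar multiple of $\mathbf{1}$.

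To finish, I would argue that $\mathbf{1}$ itself lies in the column space of $A$: every $\pi \in G_n$ belongs to exactly $n$ sets $S_{i,\pi(i)}$ (one for each $i \in \{1,\dots,n\}$), so each row of $A$ sums to $n$, and hence $A\cdot \tfrac{1}{n}\mathbf{1}_{n^2} = \mathbf{1}_{|G_n|}$. Since the $v_{i,j}$ are themselves columns of $A$, combining these observations with the displayed equation above writes $v_S$ explicitly as an element of the column space of $A$, which is the claim.

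There is essentially no obstacle here; the work has already been done in Proposition~\ref{max_indy_is_in_standard_module} and Lemma~\ref{basis}. The only minor point to handle carefully is the bookkeeping when passing from the basis $B$ of the standard module (indexed only by $i,j \in [n-1]$) back to the full column set of $A$ (indexed by all $i,j \in \{1,\dots,n\}$), which is resolved by absorbing the leftover $\tfrac{1}{n}\mathbf{1}$ term using the constant-row-sum identity.
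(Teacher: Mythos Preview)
Your proposal is correct and follows exactly the argument the paper intends: the paper treats this lemma as an immediate observation from Proposition~\ref{max_indy_is_in_standard_module}, Lemma~\ref{basis}, and the constant row-sum fact that puts $\mathbf{1}$ in the column space of $A$. You have simply written out the details that the paper leaves implicit.
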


We denote by $A_{i,j}$ the column of $A$ indexed by the pair $(i,j)$,
for any $i,j\in \{1,\dots,n\}$. Define the matrix $\overline{A}$ to be the matrix
obtained from $A$ by deleting all the columns $A_{i,n}$ and $A_{n,j}$
for any $i,j\in[n-1]$. Note that $\overline{A}$ is also obtained from
$H$ by adding the column $A_{n,n}$. With a similar method as in the
proof of \cite[Proposition 10]{MeagherS11}, we prove the following.
\begin{lem}\label{col_A_bar}
The characteristic vector of any maximum intersecting subset of $G_n$ lies in the column space of $\overline{A}$.
\end{lem}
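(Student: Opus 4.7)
The plan is to reduce the claim to Lemma~\ref{basis} by showing that the vector $\mathbf{1}$, together with the $(n-1)^2$ columns of $H$, already lies in the column space of $\overline{A}$. The characteristic vector of a maximum intersecting set is, by Proposition~\ref{max_indy_is_in_standard_module} combined with Lemma~\ref{basis}, expressible as
\[
v_S \;=\; \alpha\,\mathbf{1} \;+\; \sum_{i,j\in[n-1]} c_{i,j}\,v_{i,j}
\]
for some scalars $\alpha$ and $c_{i,j}$. Every column $v_{i,j}$ appearing here is already a column of $\overline{A}$, so the whole task is to realise the remaining $\alpha\,\mathbf{1}$ inside the column space of $\overline{A}$.

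The key step is therefore to express $\mathbf{1}$ as a linear combination of columns of $\overline{A}$, using only the blocks $v_{i,j}$ with $i,j\in[n-1]$ and the single extra column $v_{n,n}$. I would do this by evaluating the sum $\sum_{i,j\in[n-1]} v_{i,j}$ at an arbitrary permutation $\pi\in G_n$: the value at $\pi$ is the number of $i\in[n-1]$ with $\pi(i)\in[n-1]$, which equals $n-1$ when $\pi(n)=n$ and $n-2$ otherwise. Recognising this pointwise identity gives
\[
\sum_{i,j\in[n-1]} v_{i,j} \;=\; (n-2)\,\mathbf{1} \;+\; v_{n,n},
\]
and because $n\geq 5$ we may solve for $\mathbf{1}$ to obtain
\[
\mathbf{1} \;=\; \frac{1}{n-2}\Bigl(\sum_{i,j\in[n-1]} v_{i,j} \;-\; v_{n,n}\Bigr),
\]
which exhibits $\mathbf{1}$ explicitly in the column space of $\overline{A}$.

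Combining the two displays completes the proof: substituting the expression for $\mathbf{1}$ back into the first equation writes $v_S$ as a linear combination of the columns of $\overline{A}$. The only substantive obstacle is the counting identity for $\sum_{i,j\in[n-1]} v_{i,j}$; once this is verified, the rest is assembly. Note also that the argument crucially uses $n\neq 2$, which is harmless under the running hypothesis $n\geq 5$, and parallels the strategy of \cite[Proposition 10]{MeagherS11} cited by the authors.
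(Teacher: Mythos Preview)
Your argument is correct, and it is genuinely different from the paper's. The paper proves the stronger statement that $A$ and $\overline{A}$ have the same column space: it shows (using $2$-transitivity to reduce to a single case) that each deleted column $A_{i,n}$ or $A_{n,j}$ can be rebuilt from the columns of $\overline{A}$ via an explicit combination $v-w$. You bypass this entirely: rather than recovering the missing columns, you observe that Proposition~\ref{max_indy_is_in_standard_module} and Lemma~\ref{basis} already express $v_S$ in terms of $\mathbf{1}$ and the $v_{i,j}$ with $i,j\in[n-1]$, so the only thing left is to place $\mathbf{1}$ in the column space of $\overline{A}$, which your identity $\sum_{i,j\in[n-1]} v_{i,j}=(n-2)\mathbf{1}+v_{n,n}$ does immediately. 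Your route is shorter and uses only one counting identity instead of the paper's two auxiliary vectors plus the $2$-transitivity reduction; the trade-off is that the paper's argument yields the equality of column spaces of $A$ and $\overline{A}$ as a by-product, whereas yours proves exactly what the lemma states and no more.
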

\begin{proof}
  According to Lemma~\ref{col_space_A}, it is enough to show that the
  two matrices $A$ and $\overline{A}$ have the same column
  space. Obviously, the column space of $\overline{A}$ is a subspace
  of the column space of $A$; thus we only need to show that the vectors $A_{i,n}$
  and $A_{n,j}$ are in the column space of $\overline{A}$, for any
  $i,j\in[n-1]$. Since $G_n$ is two transitive, it suffices to show
  this for $A_{1,n}$. Define the vectors $v$ and $w$ as follows:
\[
v:=\sum_{i\neq 1,n} \sum_{j\neq n} A_{i,j}\quad\text{and}\quad w:=(n-3) \sum_{j\neq n} A_{1,j}\,+A_{n,n}.
\]
The vectors $v$ and $w$ are in the column space of $\overline{A}$. It is easy to see that for any $\pi \in G_n$,
\[
v_\pi=
\begin{cases}
n-2,& \quad\text{if}\quad \pi(1)=n\\
n-2, & \quad \text{if} \quad\pi(n)=n\\
n-3,& \quad\text{otherwise},
\end{cases}
\quad\quad\quad
w_\pi=
\begin{cases}
0,& \quad\text{if}\quad \pi(1)=n\\
n-2, &  \quad\text{if}\quad\pi(n)=n\\
n-3,& \quad\text{otherwise}.
\end{cases}
\]
Thus 
\[
(v-w)_\pi=
\begin{cases}
n-2,& \quad\text{if}\quad \pi(1)=n\\
0, & \quad \text{if} \quad\pi(n)=n\\
0,& \quad\text{otherwise},
\end{cases}
\]
which means that $(n-2)A_{1,n}=v-w$. This completes the proof.
\end{proof}

If the columns of $\overline{A}$ are arranged so that the first $n$
columns correspond to the pairs $(i,i)$, for $i\in \{1,\dots,n\}$, and
the rows are arranged so that the first row corresponds to the
identity element, and the next $|\mathcal{D}_{G_n}|$ rows correspond
to the elements of $\mathcal{D}_{G_n}$ (recall that these are the
derangements of $G_n$), then $\overline{A}$ has the following block
structure:
\[
\left[\begin{tabular}{cc}
1& 0\\
0 & M \\
B & C\\
\end{tabular}\right].
\]
Note that the rows and columns of $M$ are indexed by the elements of
$\mathcal{D}_{G_n}$ and the pairs $(i,j)$ with $i,j\in[n-1]$ and
$i\neq j$, respectively; thus $M$ is a $|\mathcal{D}_{G_n}|\times
(n-1)(n-2)$ matrix. We will next prove that $M$ is full rank.

\begin{prop}\label{fullrank}
For all $n\geq 5$, rank of $M$ is $(n-1)(n-2)$.
\end{prop}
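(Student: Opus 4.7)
The plan is to show that the Gram matrix $M^TM$ is non-singular, which immediately gives $M$ full column rank $(n-1)(n-2)$. First I would compute its entries: for pairs $(i,j),(k,l)$ with $i\neq j$, $k\neq l$, $i,j,k,l\in[n-1]$, one has $(M^TM)_{(i,j),(k,l)}=|\{\pi\in\mathcal{D}_{G_n}:\pi(i)=j,\pi(k)=l\}|$. Since $\alt(n)$ is $3$-transitive on $[n]$ for $n\geq 5$, this count is determined by the incidence pattern of $i,j,k,l$ and takes at most five distinct values: a diagonal value $p$; zero whenever $i=k$ or $j=l$ (for non-diagonal pairs); a value $q_2$ in the transposition case $(k,l)=(j,i)$; a common value $q_3$ in the two three-element overlap cases ($l=i,\,k\ne j$ and $k=j,\,l\ne i$); and a value $q_1$ when all four indices are distinct. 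Thus $M^TM=pI+q_1A_1+q_2A_2+q_3A_3$ for $0/1$-matrices $A_r$ recording those relations.

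Since conjugation by any $\sigma\in\sym(n-1)\subseteq\sym(n)$ preserves parity and the derangement property, $M^TM$ commutes with the diagonal $\sym(n-1)$-action on the pair-space $W:=\mathrm{span}\{e_{(i,j)}:i,j\in[n-1],\,i\neq j\}$. By removing the diagonal copy of $\mathbb{C}^{n-1}$ from $\mathbb{C}^{n-1}\otimes\mathbb{C}^{n-1}$ and using the standard decomposition of $V_{[n-2,1]}^{\otimes 2}$, one obtains
\[
W\;\cong\;V_{[n-1]}\oplus 2\,V_{[n-2,1]}\oplus V_{[n-3,2]}\oplus V_{[n-3,1,1]}.
\]
By Schur's lemma $M^TM$ acts as a scalar on each of the three multiplicity-one components and as a $2\times 2$ matrix on the $2V_{[n-2,1]}$-isotypic. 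The involution $e_{(i,j)}\mapsto e_{(j,i)}$ also commutes with $M^TM$ (via $\pi\mapsto\pi^{-1}$, which preserves parity and derangements), so splitting $W$ into its $\pm 1$-eigenspaces for this involution diagonalizes the $2\times 2$ block, producing at most five distinct eigenvalues.

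I would then read off these eigenvalues by evaluating $M^TM$ on explicit test vectors in each component. For instance $\mathbf{1}_W$ spans $V_{[n-1]}$ and yields the row-sum eigenvalue
\[
p(n-2)\;=\;p+q_2+2(n-3)q_3+(n-3)(n-4)q_1.
\]
Vectors such as $\sum_{j}e_{(1,j)}-\sum_{j}e_{(2,j)}$ together with its transpose single out the two $V_{[n-2,1]}$ copies, and symmetric/antisymmetric combinations like $e_{(1,2)}\pm e_{(2,1)}$ with suitable analogues isolate the two hook components. Each resulting eigenvalue is an explicit signed integer combination of $p,q_1,q_2,q_3$.

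The main obstacle is verifying that none of these signed combinations vanishes. The parameters can be expressed in closed form via the even and odd derangement counts on $n$, $n-2$, $n-3$ letters (using the identity $d_k^{+}-d_k^{-}=(-1)^{k-1}(k-1)$ obtained from $\det(J-I)$). Since $d_k^{\pm}\sim k!/(2e)$, the ``generic'' count $q_1$ asymptotically dominates the others, so each eigenvalue is positive for $n$ sufficiently large; the remaining small cases $n=5,6,7$ can be handled by a direct numerical check. This gives $M^TM$ positive definite and so the rank claim follows.
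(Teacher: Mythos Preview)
Your approach is viable but differs substantially from the paper's. You work with the full Gram matrix $M^TM$, decompose the pair space as a $\sym(n-1)$-module, and reduce to checking that five explicit eigenvalues---expressed in the even-derangement counts $p,q_1,q_2,q_3$---are nonzero; this is correct in outline, though the invariance you need really comes from $\sym(n)$- (or $\sym(n-1)$-) conjugation of $\mathcal{D}_{G_n}$ rather than $3$-transitivity of $\alt(n)$, which for $n=5$ would not cover the all-distinct case. The paper instead passes to a \emph{submatrix} $M_1$ of $M$ whose rows are indexed by a single conjugacy class of derangements ($n$-cycles when $n$ is odd, products of two disjoint $(n/2)$-cycles when $n$ is even). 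Because an $n$-cycle cannot satisfy $\pi(i)=j,\ \pi(j)=i$, the off-diagonal counts collapse to a single value $(n-3)!$, so that $M_1^TM_1=(n-2)!\,I+(n-3)!\,A(X)$ for one explicit graph $X$ on the pairs; the least eigenvalue of $X$ is then bounded below by $-(n-3)$ via an elementary clique-covering argument (their Proposition~2.6), and positive definiteness follows immediately. Your route is more systematic and makes the commutant structure transparent, but the parameters $p,q_i$ involve alternating derangement numbers and the final nonvanishing step is genuine work that you have only sketched; the paper's submatrix trick trades all of that for two clean factorial entries and a short combinatorial eigenvalue bound, at the modest cost of splitting into the parity of $n$.
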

\begin{proof} First assume $n$ is odd. Consider the submatrix $M_1$ of
  $M$ that is comprised of all the rows in $M$ that are indexed by
  cyclic permutations of length $n$.  Set $T=M_1^TM_1$; it suffices to
  show that $T$ is non-singular. Consider all types of entries of
  $T$. If $i,j,k,l$ are in $[n-1]$, then the following are all
  possible cases for the pairs $(i,j)$ and $(k,l)$.
\begin{itemize}
\item $i=k$ and $j=l$; in this case $T_{(i,j),(i,j)}=(n-2)!$; because
  the number of all $n$-cycles mapping $i$ to $j$ is $(n-2)!$.

\item $i=l$ and $j=k$; in this case $T_{(i,j),(j,i)}=0$; because the
  only case in which an $n$-cycle can swap $i$ and $j$ is $n=2$.

\item $i=k$ and $j\neq l$; in this case $T_{(i,j),(i,l)}=0$; because
  there is no permutation mapping $i$ to two different numbers.

\item $i\neq k$ and $j=l$; again $T_{(i,j),(k,j)}=0$.

\item $i\neq l$ and $j=k$; in this case $T_{(i,j),(j,l)}=(n-3)!$;
  because the number of all $n$-cycles mapping $i$ to $j$ and $j$ to
  $l$ is $(n-3)!$.

\item $i=l$ and $j\neq k$; in this case $T_{(i,j),(k,i)}=(n-3)!$; with
  a similar reasoning as above.

\item $\{i,j\}\cap \{k,l\}=\emptyset$; in this case
  $T_{(i,j),(k,l)}=(n-3)!$; because the number of $n$-cycles
  mapping $i$ to $j$ and $k$ to $l$ is ${n-3\choose 1}(n-4)!=(n-3)!$.
\end{itemize}

Therefore, one can write $T$ as
\begin{equation}\label{4th eq}
T=(n-2)!I+(n-3)!A(X),
\end{equation}
where $I$ is the identity matrix of size $(n-1)(n-2)$ and $A(X)$ is
the adjacency matrix of the graph $X$ defined as follows: the vertices
of $X$ are all the ordered pairs $(i,j)$ where $i,j\in[n-1]$ and
$i\neq j$; the vertices $(i,j)$ and $(k,l)$ are adjacent in $X$ if and
only if either $\{i,j\}\cap\{k,l\}=\emptyset$, or $i=l$ and $j\neq k$,
or $i\neq k$ and $j=l$. Note that $X$ is a regular graph of valency
$(n-2)(n-3)$. Our next result, Lemma~\ref{least_eval_of_X}, will show that the
least eigenvalue of $X$ is greater than or equal to $-(n-3)$; thus
using (\ref{4th eq}), the least eigenvalue of $T$ is at least
\[(n-2)!-(n-3)!(n-3)=(n-3)!>0;
\]
therefore $T$ is non-singular and the proof is complete for the case $n$ is odd.

Now assume $n$ to be even.  Consider the subset of
$\mathcal{D}_{G_n}$ which consists of all the permutations of $G_n$
whose cycle decomposition includes two cycles of length $n/2$ and let
$M_2$ be the submatrix of $M$ whose rows are indexed by
these permutations. Define $U=M_2^TM_2$. With a similar approach as
for the previous case, one can write $U$ as
\[
U=\frac{2(n-2)!}{n}I+\frac{2(n-3)!}{n}A(X).
\]
According to Lemma~\ref{least_eval_of_X}, the least eigenvalue of $U$ is at least
\[ 
\frac{2(n-2)!}{n}-\frac{2(n-3)!}{n}(n-3)=\frac{2(n-3)!}{n}>0;
\]
therefore $U$ is non-singular and the proof is complete.
\end{proof}

\begin{lem}\label{least_eval_of_X}
 Let $n>3$ and $X$ be the graph defined in Lemma~\ref{fullrank}. The least eigenvalue of $X$ is at least $-(n-3)$.
\end{lem}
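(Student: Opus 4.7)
The plan is to apply Proposition~\ref{w} with clique size $w = n-1$. Since $X$ is regular of valency $k = (n-2)(n-3)$, once I exhibit a collection $\mathcal{C}$ of $(n-1)$-cliques of $X$ such that every edge of $X$ lies in the same number of members of $\mathcal{C}$, the proposition will give
\[
\tau \;\geq\; -\frac{k}{w-1} \;=\; -\frac{(n-2)(n-3)}{n-2} \;=\; -(n-3),
\]
which is exactly the desired bound.

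To produce such a family I would take, for each $(n-1)$-cycle $\pi \in \sym(n-1)$, the set
\[
C_\pi := \{(i,\pi(i)) : i \in [n-1]\}.
\]
Then $|C_\pi| = n-1$, and I claim $C_\pi$ is a clique in $X$: for distinct $i,k \in [n-1]$ one has $i \neq k$ and $\pi(i) \neq \pi(k)$, and since $\pi$ has order $n-1 \geq 3$ it cannot simultaneously satisfy $\pi(i) = k$ and $\pi(k) = i$. Hence the pairs $(i,\pi(i))$ and $(k,\pi(k))$ either satisfy $\pi(i) = k$ with $\pi(k) \neq i$, or $\pi(k) = i$ with $\pi(i) \neq k$, or $\{i,\pi(i)\} \cap \{k,\pi(k)\} = \emptyset$ --- each of which is an adjacency in $X$.

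The remaining task is the uniformity of the covering: I would fix an edge $(i,j) \sim (k,l)$ in $X$ and count the $(n-1)$-cycles $\pi \in \sym(n-1)$ with $\pi(i) = j$ and $\pi(k) = l$. Each such $\pi$ corresponds to a directed Hamilton cycle of $K_{n-1}$ containing both arcs $i \to j$ and $k \to l$. For a chain edge (either $j = k$ or $i = l$) the two arcs fuse into a directed path on three distinct vertices and the remainder of the cycle is any linear ordering of the leftover $n-4$ vertices, yielding $(n-4)!$ completions. For a disjoint edge, the rest of the cycle splits into the two arcs of the cycle between $j$ and $k$ and between $l$ and $i$, and a short computation gives
\[
\sum_{s=0}^{n-5}\binom{n-5}{s}\,s!\,(n-5-s)! \;=\; (n-4)\,(n-5)! \;=\; (n-4)!,
\]
so again $(n-4)!$. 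Thus every edge of $X$ lies in exactly $(n-4)!$ cliques of $\mathcal{C}$, and Proposition~\ref{w} immediately delivers the claimed lower bound on $\tau$.

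The main obstacle is precisely this uniformity check across the three adjacency types; it amounts to elementary Hamilton-cycle bookkeeping and I foresee no conceptual difficulty. (As a sanity check, each vertex $(i,j)$ lies in $(n-3)!$ cliques, consistent with the identity $\frac{k}{w-1}\cdot(n-4)! = (n-3)!$ from the proof of Proposition~\ref{w}.)
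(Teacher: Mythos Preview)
Your proposal is correct and is essentially the same argument as the paper's: both take the family of $(n-1)$-cliques coming from the $(n-1)$-cycles of $\sym(n-1)$ (the paper writes them as $C_\alpha$ for a cyclic word $\alpha=(i_1,\dots,i_{n-1})$, you as $C_\pi$ for an $(n-1)$-cycle $\pi$), verify that every edge of $X$ lies in exactly $(n-4)!$ of them, and then invoke Proposition~\ref{w}. Your write-up is in fact slightly more thorough, since you explicitly check that each $C_\pi$ is a clique and spell out the disjoint-arc count via the sum $\sum_{s=0}^{n-5}\binom{n-5}{s}s!(n-5-s)!=(n-4)!$, where the paper simply states the answer.
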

\begin{proof}

  First note that any cyclic permutation $\alpha=(i_1,\ldots,
  i_{n-1})$ of $\{1,2,\dots ,n-1\}$ corresponds to a unique clique of
  size $n-1$ in $X$; namely the clique $C_\alpha$ induced by the
  vertices $\{(i_1,i_2), (i_2,i_3),\ldots, (i_{n-2},i_{n-1}),
  (i_{n-1},i_1)\}$. We claim that any edge of $X$ is contained in
  exactly $(n-4)!$ cliques of form $C_\alpha$. 

  Consider the edge $\{(a,b),(c,d)\}$.  If $a\neq d$ and $b=c$ (or
  $a=d$ and $b \neq c$), then there are $(n-4)!$ cyclic permutation of
  form $(a,b,d, -, - ,\cdots, -)$ and this edge is in exactly $(n-4)!$
  of the cliques. If $\{a,b\}\cap \{c,d\}=\emptyset$ then there are
  again $(n-4)!$ cyclic permutation of form $(a,b, -,\cdots,-, c,d,
  -,\cdots,-)$ (as there are $(n-4)$ ways to assign a position for the
  pair $c,d$, and then there are $(n-5)!$ ways to arrange other
  elements of $\{1,\ldots,n-1\}$ in the remaining spots).

  Thus the claim is proved. If $\tau$ denotes the least
  eigenvalue of $X$, then we can apply Proposition~\ref{w} to $X$ to
  get that
\[
\tau\geq \frac{-k}{w-1}=-\frac{(n-2)(n-3)}{n-2}=-(n-3).\qedhere
\]
\end{proof}

Now we are ready to prove the main theorem.

\begin{proof} (Theorem~\ref{main_Alt}) Let $S$ be a intersecting set
  of permutations in $G_n$. Then $S$ is an independent set in the
  graph $\Gamma_{G_n}$. In Subsections~\ref{subsec:odd} and
  \ref{subsec:even} cliques of size $n$ in $\Gamma_{G_n}$ are
  given. By the clique-coclique bound no independent set is larger
  than $\frac{(n-1)!}{2}$; thus the bound in Theorem~\ref{main_Alt}
  holds.

  Further suppose that $S$ is of maximum size (namely
  $\frac{(n-1)!}{2}$) and let $v_S$ be the characteristic vector of
  $S$.  To complete the proof of this theorem, it is enough to show
  that $S=S_{i,j}$, for some $i,j\in \{1,\dots,n\}$.

  Without loss of generality, we may assume that $S$ includes the
  identity element. By Lemma~\ref{col_A_bar}, $v_S$ is in the column
  space of $\overline{A}$, thus
\[
\left[\begin{tabular}{cc}
1 & 0\\
0 & M \\
B & C\\
\end{tabular}\right]\begin{bmatrix} v \\ w  \end{bmatrix}
=v_S
\]
for some vectors $v$ and $w$. Since the identity is in $S$, no
elements from $D_{G_n}$ are in $S$, and the characteristic vector of
$S$ has the form
\[
v_S= \begin{bmatrix} 1 \\ 0 \\ t  \end{bmatrix}
\]
for some vector $t$. Thus we have $1^T v=1$, $Mw=0$ and
$Bv+Cw=t$. According to Lemma~\ref{fullrank}, $M$ is full rank;
therefore, $w=0$ and so $Bv=t$. 

Furthermore, for any $x\in \{1,\dots,n\}$, there is a permutation $g_x\in G_n$
which has only $x$ as its fixed point. Then by a proper permutation of
the rows of $B$, one can write
\[
B=\begin{bmatrix} I_{n} \\[.2cm] B'  \end{bmatrix}\quad 
\text{and}\quad 
Bv=\begin{bmatrix} v \\[.2cm] B'v  \end{bmatrix}. 
\]
Since $Bv$ is equal to the $01$-vector $t$, the vector $v$ must also
be a $01$-vector. But, on the other hand, $1^T v=1$, thus we conclude that
exactly one of the entries of $v$ is equal to $1$. This means that $v_S$ is the
characteristic vector of the stabilizer of a point.
\end{proof}


\section{Conclusions}\label{conclusions}

An interesting result of Theorem~\ref{main_Alt} is that it implies
that the symmetric group also has the strict EKR property. To show
this we will state two results that were first pointed out by Pablo
Spiga; the proof we give of the first result is due to Chris
Godsil.

\begin{theorem}
  Let $G$ be a transitive subgroup of $\sym(n)$ and let $H$ be a
  transitive subgroup of $G$. If $H$ has the EKR property, then $G$
  has the EKR property. 
\end{theorem}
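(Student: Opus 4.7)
The plan is to partition $G$ into right cosets of $H$ and apply the EKR hypothesis on each coset separately. Since $G$ and $H$ are both transitive subgroups of $\sym(n)$, orbit--stabilizer forces every point-stabilizer in $G$ (respectively in $H$) to have size $|G|/n$ (respectively $|H|/n$); in particular, the EKR property for a transitive group in $\sym(n)$ is equivalent to the statement that every intersecting subset has size at most $|G|/n$. So the goal is to show $|S|\leq |G|/n$ for every intersecting $S\subseteq G$.

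Fix right coset representatives $g_1,\dots,g_m$ of $H$ in $G$, where $m=[G:H]$, and let $S\subseteq G$ be intersecting. For each $i$, set $T_i := (S\cap Hg_i)\,g_i^{-1}\subseteq H$. The crucial observation is that $T_i$ is intersecting in $H$: for any two elements $sg_i^{-1}$ and $s'g_i^{-1}$ of $T_i$, we have $(sg_i^{-1})(s'g_i^{-1})^{-1}=s(s')^{-1}$, which has a fixed point in $\{1,\dots,n\}$ because $S$ is intersecting. Since $H$ has the EKR property this forces $|T_i|\leq |H|/n$, and because right-multiplication by $g_i^{-1}$ is a bijection from $S\cap Hg_i$ onto $T_i$, we also have $|S\cap Hg_i|=|T_i|$. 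Summing over the cosets yields $|S|=\sum_{i=1}^{m}|S\cap Hg_i|\leq m\cdot |H|/n = |G|/n$, as required.

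There is essentially no obstacle here: the argument collapses to two mechanical observations, namely that right-translation by a group element preserves the intersecting property (because the factor $g_i^{-1}$ cancels in $\pi\sigma^{-1}$) and that for a transitive subgroup of $\sym(n)$ the EKR property is just the bound $|S|\leq|G|/n$. The transitivity of $H$ is used only to compute the right-hand side of the bound coming from the EKR property on $H$; the transitivity of $G$ is used only to identify $|G|/n$ as the EKR bound for $G$.
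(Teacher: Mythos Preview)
Your argument is correct, and it is genuinely different from the paper's proof. The paper argues via the fractional chromatic number: since $\Gamma_H$ is vertex-transitive and $H$ has the EKR property, $\chi_f(\Gamma_H)=|H|/\alpha(\Gamma_H)=n$; the inclusion $\Gamma_H\hookrightarrow\Gamma_G$ is a graph homomorphism, so $\chi_f(\Gamma_G)\geq n$; and vertex-transitivity of $\Gamma_G$ then gives $\alpha(\Gamma_G)\leq |G|/n$. Your proof instead partitions $G$ into right cosets of $H$, observes that right-translation cancels in $\pi\sigma^{-1}$ so each translated slice $T_i=(S\cap Hg_i)g_i^{-1}$ is intersecting in $H$, and sums the bounds $|T_i|\leq |H|/n$. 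Your route is more elementary---it needs no fractional chromatic number, no graph homomorphisms, and no appeal to vertex-transitivity beyond orbit--stabilizer---and in fact it is exactly the coset-slicing idea the paper itself deploys in the very next theorem (the strict EKR version). The paper's approach, on the other hand, situates the statement inside a general monotonicity principle (fractional chromatic number is non-decreasing under homomorphisms), which would apply even if $\Gamma_H$ merely admitted a homomorphism into $\Gamma_G$ rather than being an induced subgraph via a subgroup inclusion.
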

\proof The group $H$ has the EKR property and is transitive, so the
size of the maximum coclique is $|H|/n$. Further, the graph $\Gamma_H$
is vertex transitive so its fractional chromatic number is $n$ (see
\cite[Chapter 7]{MR1829620} for details about the fractional chromatic
number of a graph).
 
The embedding $\Gamma_H \rightarrow \Gamma_G$ is a homomorphism, so
the fractional chromatic number of $\Gamma_G$ is at least the
fractional chromatic number of $\Gamma_H$. The graph
$\Gamma_G$ is also vertex transitive, so
\[
n \leq \frac{|G|}{\alpha(\Gamma_G)}
\]
where $\alpha(\Gamma_G)$ is the size of a maximum independent set.
Thus $\alpha(\Gamma_G) \leq \frac{|G|}{n}$, and since $G$ is
transitive the stabilizer of a point achieves this bound.
\qed

If the groups $G$ and $H$ in the above theorem are $2$-transitive then
we can say something about when the strict EKR property holds for $G$.

\begin{thm}\label{2transitive_subgroup} Let $G$ be a $2$-transitive
  subgroup of $\sym(n)$ and let $H$ be a $2$-transitive subgroup of
  $G$. If $H$ has the strict EKR property then $G$ has the strict EKR
  property.
\end{thm}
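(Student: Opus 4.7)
The plan is to use the strict EKR property of $H$ to control a maximum intersecting subset $S \subseteq G$ coset-by-coset, and then promote the local information to a global conclusion via the $2$-transitivity of $H$. Let $S$ be a maximum intersecting subset of $G$; by the preceding theorem $|S| = |G|/n$. After replacing $S$ by $s^{-1}S$ for some $s \in S$---which preserves the intersecting property because $(s^{-1}\sigma_1)(s^{-1}\sigma_2)^{-1}$ is conjugate to $\sigma_1\sigma_2^{-1}$, and replaces any coset of a point stabilizer by the point stabilizer itself---I may assume $1 \in S$. It then suffices to show that $S$ is a point stabilizer of $G$.

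First I slice $S$ by the left cosets of $H$ in $G$. For each $g \in G$, the set $g^{-1}(S \cap gH) \subseteq H$ is intersecting by the same conjugation observation, so the EKR property of $H$ gives $|S \cap gH| \leq |H|/n$. Summing over the $[G:H]$ cosets yields $|S| \leq |G|/n$; equality forces $|S \cap gH| = |H|/n$ for every $g$. In particular $S \cap H$ is a maximum intersecting subset of $H$ containing the identity, so by strict EKR for $H$ there is a point $a \in \{1,\ldots,n\}$ with $S \cap H = \{h \in H : h(a) = a\}$.

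The main step---which I expect to be the principal obstacle---is to upgrade this to the assertion that every $\sigma \in S$ fixes $a$. Suppose for contradiction that some $\sigma \in S$ satisfies $\sigma(a) = b \neq a$; I will locate $h \in S \cap H$ with $h(j) \neq \sigma(j)$ for every $j$, so that $\sigma h^{-1}$ is a derangement, contradicting that $S$ is intersecting. For $j = a$ this is automatic since $h(a) = a \neq b$. For $j \neq a$, the $2$-transitivity of $H$ makes $S \cap H$ transitive on $\{1,\ldots,n\}\setminus\{a\}$, so the set $\{h \in S \cap H : h(j) = \sigma(j)\}$ is empty when $\sigma(j) = a$ and otherwise a coset of the pointwise stabilizer of $\{a,j\}$ in $H$, of size $|S \cap H|/(n-1)$. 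Because $j = \sigma^{-1}(a) \neq a$ is one value of $j$ where $\sigma(j) = a$, a union bound gives at most
\[
(n-2)\,\frac{|S \cap H|}{n-1} \;<\; |S \cap H|
\]
obstructed elements, so the required $h$ exists.

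Once this is established, $S \subseteq \mathrm{Stab}_G(a)$; since $G$ is transitive, $|\mathrm{Stab}_G(a)| = |G|/n = |S|$, and therefore $S = \mathrm{Stab}_G(a)$, a point stabilizer of $G$, which is what was needed.
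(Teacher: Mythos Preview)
Your argument is correct and follows essentially the same route as the paper: reduce to $1\in S$, slice $S$ by left cosets of $H$, use strict EKR for $H$ to get $S\cap H=H_a$, and then derive a contradiction from a hypothetical $\sigma\in S$ with $\sigma(a)\neq a$ via a union bound over the $n-2$ relevant points using $2$-transitivity of $H$. Your ``find an $h\in H_a$ that agrees with $\sigma$ nowhere'' is exactly the contrapositive of the paper's covering statement $H_\alpha\pi^{-1}\subseteq\bigcup_{\beta\neq\alpha,\pi(\alpha)}G_\beta$, and both reduce to the same count $|\{h\in H_a:h(j)=\sigma(j)\}|=|H|/\bigl(n(n-1)\bigr)$; your write-up is in fact a bit more explicit about why each coset contributes exactly $|H|/n$.
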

\begin{proof}
  Since $H$ has the strict EKR property, it also has the EKR property
  and by the previous result $G$ also has the EKR property. Assume
  that $S$ is a coclique in $\Gamma_G$ of size $|G|/n$ that contains
  the identity; we will prove that $S$ is the stabilizer of a point.

  Let $\{x_1=id,\dots,x_{[G:H]}\}$ be a left transversal of $H$ in $G$
  and set $S_i = S \cap x_iH$.  Then for each $i$ the set
  $x_i^{-1}S_i$ is an independent set in $\Gamma_H$ with size $|H|/n$. Since
  $H$ has the strict EKR property each $x_i^{-1}S_i$ is the coset of a
  stabilizer of a point.

  Since $x_1 = id$, the identity is in $S_1$ which means that $S_1$ is
  the stabilizer of a point and we can assume that $S_1 = H_\alpha$
  for some $\alpha \in \{1,\dots,n\}$. We need to show that every
  permutation in $S$ also fixes the point $\alpha$. Assume that there
  is a $\pi \in S$ that does not fix $\alpha$.  Since $S$ is
  intersecting, for every $\sigma \in S_1$ the permutation $\sigma
  \pi^{-1}$ fixes some element (but not $\alpha$ and not
  $\pi(\alpha)$), from this it follows that
\[
H_\alpha \pi^{-1} \subseteq \bigcup_{\stackrel{\beta \neq \alpha}{\beta \neq \pi(\alpha)}} G_\beta
= \bigcup_{\stackrel{\beta \neq \alpha}{\beta \neq \pi(\alpha)}} ( G_\beta \cap H_\alpha \pi^{-1} ).
\]

Assume that $\sigma \pi^{-1} \in G_\beta \cap H_\alpha \pi^{-1}$, then
$\beta^{\sigma \pi^{-1}} = \beta$ and $\alpha^{\sigma} = \alpha$.  The
permutation $\sigma \pi^{-1}$ must map $(\alpha, \beta)$ to
$(\alpha^{\pi^{-1}}, \beta)$.  Since the group $H$ is $2$-transitive
there are exactly $|H|/n(n-1)$ such permutations and we have that
\[
| G_\beta \cap H_\alpha \pi^{-1}| = \frac{|H|}{n(n-1)}.
\]
From this we have that the size of $H_\alpha \pi^{-1}$ is
\[
\sum_{\stackrel{\beta \neq \alpha}{\beta \neq \pi(\alpha)}}\frac{|H|}{n(n-1)} = (n-2) \frac{|H|}{n(n-1)},
\]
but since this is strictly less that $\frac{|H|}{n}$,  this is a contradiction.
\end{proof}

This theorem along with Theorem~\ref{main_Alt} provides an alternative
proof of the following theorem which was initially proved in
\cite{MR2009400}.
\begin{cor} 
For any $n\geq 2$, the group $\sym(n)$ has the strict EKR property.
\end{cor}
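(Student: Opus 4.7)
The plan is to derive the corollary by applying Theorem~\ref{2transitive_subgroup} with $G=\sym(n)$ and $H=\alt(n)$. The first step is to verify the hypotheses: for $n\geq 4$ the alternating group $\alt(n)$ acts $2$-transitively on $\{1,\ldots,n\}$, and $\sym(n)$ is of course $2$-transitive, so $H$ is a $2$-transitive subgroup of the $2$-transitive group $G$. The second step is to invoke Theorem~\ref{main_Alt}, which supplies the strict EKR property for $\alt(n)$ for every $n\geq 5$. Plugging these two facts into Theorem~\ref{2transitive_subgroup} yields the strict EKR property for $\sym(n)$ whenever $n\geq 5$ in essentially one line.

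The remaining task is to dispose of the three exceptional values $n\in\{2,3,4\}$, where either $\alt(n)$ is not $2$-transitive (this occurs for $n\leq 3$) or Theorem~\ref{main_Alt} does not apply (as for $n=4$). For $n=2$ and $n=3$ the group $\sym(n)$ has order $2$ or $6$, and I would dispose of these by direct inspection of the intersecting subsets of maximum size $(n-1)!$, checking that each coincides with one of the cosets $S_{i,j}=\{\pi\in\sym(n):\pi(i)=j\}$ of a point stabilizer. For $n=4$ the cleanest route is to verify separately, by a short combinatorial argument on the $12$ elements of $\alt(4)$, that $\alt(4)$ has the strict EKR property; then Theorem~\ref{2transitive_subgroup} applies exactly as in the main case. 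Alternatively, one can simply quote the classical result for $\sym(4)$.

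The bulk of the work has already been done in Theorem~\ref{main_Alt} and Theorem~\ref{2transitive_subgroup}, so no genuinely new idea is needed. The only point where any care is required is the $n=4$ case, where the alternating group bound from Theorem~\ref{main_Alt} is unavailable and one must supply a small separate verification. Beyond that technicality, the corollary is essentially a one-line deduction from the machinery already developed.
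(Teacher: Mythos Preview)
Your main argument for $n\geq 5$ --- applying Theorem~\ref{2transitive_subgroup} with $G=\sym(n)$ and $H=\alt(n)$, invoking Theorem~\ref{main_Alt} for the hypothesis on $H$ --- is exactly the paper's route; the paper's ``proof'' is literally the sentence preceding the corollary.

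However, your handling of $n=4$ contains a factual error: $\alt(4)$ does \emph{not} have the strict EKR property. The point-stabilizers in $\alt(4)$ have size $3$, and the set $\{e,\,(1\,2\,3),\,(1\,2\,4)\}$ is intersecting (the three pairs agree respectively at $4$, at $3$, and at $1$) yet is not a coset of any point-stabilizer, since the three permutations share no common pair $(i,j)$ with $\pi(i)=j$. This is precisely why Theorem~\ref{main_Alt} is stated only for $n\geq 5$. So your proposed ``short combinatorial argument on the $12$ elements of $\alt(4)$'' would fail. Your fallback of quoting the classical result for $\sym(4)$ directly is fine, and the cases $n=2,3$ are indeed disposed of by inspection; the paper itself does not spell out the small cases and implicitly relies on them being known or trivial.
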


\newpage

\end{document}